\tikzstyle{vertex}=[circle, draw, inner sep=0pt, minimum size=5pt,fill=black]
\newcommand{\vertex}{\node[vertex]}
\newtheorem{theorem}{Theorem}[section]
\newtheorem{lemma}[theorem]{Lemma}
\newtheorem{corollary}[theorem]{Corollary}
\newtheorem{problem}[theorem]{Problem}
\newtheorem{proposition}[theorem]{Proposition}
\newcommand{\diam}{D}
\renewcommand{\Re}{\mathrm{Re}}
\renewcommand{\Im}{\mathrm{Im}}
\begin{document}

\title{On the Roots of Wiener Polynomials of Graphs}
\author{JASON I. BROWN\footnote{Supported by an NSERC grant CANADA, Grant number RGPIN 170450-2013} \\
\small{Department of Mathematics and Statistics}\\
\small{Dalhousie University, Halifax, NS, CANADA}\\
\small{jason.brown@dal.ca}\\
LUCAS MOL and ORTRUD R. OELLERMANN\footnote{Supported by an NSERC grant CANADA, Grant number RGPIN 05237-2016}\\
 \small{Department of Mathematics and Statistics}\\
 \small{University of Winnipeg, Winnipeg, MB, CANADA}\\
 \small{l.mol@uwinnipeg.ca, o.oellermann@uwinnipeg.ca}}
\date{}

\maketitle

\begin{abstract}
The {\em Wiener polynomial} of a connected graph $G$ is defined as  $W(G;x)=\sum x^{d(u,v)}$, where $d(u,v)$ denotes the distance between $u$ and $v$, and the sum is taken over all unordered pairs of distinct vertices of $G$. We examine the nature and location of the roots of Wiener polynomials of graphs, and in particular trees. We show that while the maximum modulus among all roots of Wiener polynomials of graphs of order $n$ is $\binom{n}{2}-1$, the maximum modulus among all roots of Wiener polynomials of trees of order $n$ grows linearly in $n$.  We prove that the closure of the collection of real roots of Wiener polynomials of all graphs is precisely $(-\infty, 0]$, while in the case of trees, it contains $(-\infty, -1]$.  Finally, we demonstrate that the imaginary parts and (positive) real parts of roots of Wiener polynomials can be arbitrarily large.\\

\noindent
\textbf{MSC 2010:} 05C31, 05C12

\noindent
\textbf{Keywords:} Graph polynomials; Wiener polynomial; Polynomial roots; Wiener index; Distance in graphs
\end{abstract}

\section{Introduction}

Let $G$ be a connected graph of order at least $2$ and {\em diameter} $D(G)$.  The {\em Wiener polynomial} of $G$ is defined as
\[
W(G;x)=\sum x^{d(u,v)},
\]
where $d(u,v) = d_G(u,v)$ is the {\em distance} between $u$ and $v$, and the sum is taken over all distinct unordered pairs of vertices $u$ and $v$ of $G$. Thus if $d_i(G)$ denotes the number of pairs of vertices distance $i$ apart in $G$, for $i = 1,2,\ldots,D(G)$, then
\[
W(G;x) = \sum_{i=1}^{D(G)} d_i(G)x^i.
\]
If $G$ is clear from context, then we write $d_i$ and $D$ instead of $d_i(G)$ and $D(G),$ respectively.  Note that we restrict our discussions to connected graphs of order at least $2$ throughout, so that the Wiener polynomial is well-defined and nonzero.  Adding loops or multiple edges to a connected graph has no effect on the Wiener polynomial, so we lose no generality by considering only simple graphs.

The successful study of the chromatic~\cite{Chromatic}, characteristic~\cite{Characteristic}, independence~\cite{Independence} and reliability~\cite{Reliability} polynomials of graphs has motivated the study of various other graph polynomials.  The Wiener polynomial was introduced in \cite{Hos} and independently in \cite{SYZ}, and has since been studied several times (see \cite{di,GKYY, WRSSG, YYY}, for example).  Unlike many other graph polynomials (such as the chromatic, reliability and independence polynomials), the Wiener polynomial can be calculated in polynomial time.

Interest in the Wiener polynomial first arose out of work on the {\em Wiener index} of a connected graph, introduced in \cite{Wien} and defined as the sum of the distances between all pairs of vertices of the graph. It can readily be seen that the Wiener index of $G$ is obtained by evaluating the derivative of the Wiener polynomial of $G$ at $x=1$.  The Wiener index of a molecular graph is closely correlated with certain physical properties of the substance, such as its melting and boiling point; see \cite{Wien, GK, ST, RC}.  Applications of the Wiener index have also been found outside of chemisty, for example in the design of architectural floor plans~\cite{architect}.

Wiener polynomials of trees will be of particular interest to us, so we mention that the Wiener polynomial of a tree also arises in the context of network reliability.  Given a graph $G$ in which each edge is operational with probability $p$, the \emph{resilience} or \emph{pair-connected reliability} of $G$ is the expected number of pairs of vertices of $G$ that can communicate~\cite{Colbourn, ASS1993}.  In particular, in a tree $T$, the probability that any pair of vertices $u$ and $v$ can communicate is simply $p^{d(u,v)}$.  Therefore, the resilience of $T$ is exactly $W(T;p)$.

In this article, we focus on the study of the roots of the Wiener polynomial of a graph $G$, which we call the \emph{Wiener roots} of $G$.  The Wiener roots of all connected graphs of order $8$ are shown in Figure~\ref{order8}.  Though the Wiener roots of some narrow families of graphs were studied in \cite{di}, little is known about the nature and location of the Wiener roots of graphs in general.  Given that properties of the roots of other graph polynomials have been well-studied, it is natural to study the roots of the Wiener polynomial in greater depth.

\begin{figure}[ht]
\centering
\begin{tikzpicture}
\node[inner sep=0pt,left] (plot) at (0,0)
    {\includegraphics[scale=0.5]{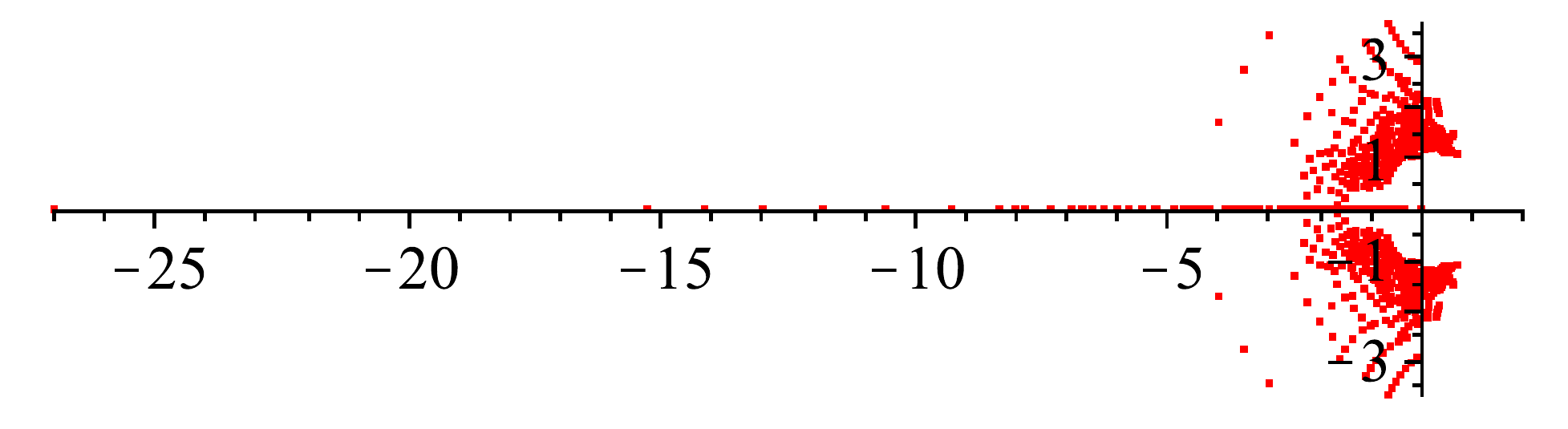}};
\node[right] at (0,-0.05) {\footnotesize $\mbox{Re}(z)$};
\node at (-0.55,1.4) {\footnotesize $\mbox{Im}(z)$};
\end{tikzpicture}
\caption{The Wiener roots of all connected graphs of order $8$.}
\label{order8}
\end{figure}

Since $W(G;x) = x(d_1+d_2x+ \ldots +d_Dx^{D-1})$, we see that $x=0$ is a root of every Wiener polynomial. The remaining roots are those of the polynomial $\widehat{W}(G;x) = d_1+d_2x+ \ldots +d_Dx^{D-1}$, which we will refer to as the {\em reduced Wiener polynomial} of $G$. Note that the value of $d_{1}$ is the number of edges, so that if the order of $G$ is at least $2$, then $0$ is \emph{not} a root of $\widehat{W}(G;x)$ (and hence is a simple root of the original Wiener polynomial $W(G;x)$).

The layout of the article is as follows.  In Section~\ref{BoundSection}, we determine that $\binom{n}{2}-1$ is the maximum modulus among all Wiener roots of connected graphs of order $n$, and we describe the extremal graphs which have a root of this maximum modulus.  In contrast, we then show that for trees, the rate of growth of the maximum modulus is $\Theta(n)$.  Finally, we determine that $\tfrac{2}{n-2}$ is the minimum modulus among all nonzero Wiener roots of connected graphs of order $n$, and again we are able to describe the extremal graphs (which are trees).  In Section~\ref{DensitySection}, we prove that the closure of the collection of all real Wiener roots of connected graphs is $( -\infty, 0]$, and that for trees, the closure of all real Wiener roots still contains $(-\infty,-1]$.  In Section~\ref{RealAndImagSection}, we study the real and imaginary parts of Wiener roots.  We show that there are Wiener roots of trees with arbitrarily large imaginary part, and that there are Wiener roots of trees with arbitrarily large positive real part.  This work culminates in a proof of the fact that the collection of complex Wiener roots of all graphs is not contained in any {\em half-plane} (a region in the complex plane consisting of all points on one side of a line and no points on the other side).  Finally, we prove that almost all graphs have all real Wiener roots, and we find purely imaginary Wiener roots.  Throughout, we compare and contrast our results with what is known about the roots of other graph polynomials.

\section{Bounding the Modulus of Wiener Roots}\label{BoundSection}

Bounding the modulus of roots of various graph polynomials has been a central point of interest in the literature. For example, Sokal~\cite{S} settled an outstanding problem in proving that the modulus of a chromatic root (a root of the chromatic polynomial) of a graph of maximum degree $\Delta$ is at most a constant times $\Delta$.  This implies that the modulus of any chromatic root of a graph of order $n$ is at most $Cn$ for some positive constant $C$; the precise value of the smallest such constant is still unknown.  The \emph{independence polynomial} of a graph $G$ is the generating function for the number of independent sets of each cardinality in $G$.  Among all graphs of order $n$ and fixed independence number $\beta$, the maximum modulus of an independence root is $(n/\beta)^{\beta-1}+O\left({n}^{\beta - 2}\right)$~\cite{BN}.  For the \emph{all-terminal reliability} polynomial, it is suspected that the collection of all roots is bounded in modulus (as no roots have been found outside of the disk $|z-1|\leq 1.2$). Currently the best known upper bound on the modulus of the roots for all graphs of order $n$ is linear in $n$ as shown in ~\cite{BM}.  For the independence polynomials, the all-terminal-reliability polynomials and the chromatic polynomials, the extremal graphs that yield roots of maximum modulus are not known. In contrast we completely determine the extremal graphs for which the Wiener polynomials, of all connected graphs of order $n$, have maximum modulus of any root.  To do so, we will make use of the classical Enestr\"{o}m-Kakeya Theorem~\cite{EK}.

\begin{theorem}[The Enestr\"{o}m-Kakeya Theorem]
If $f(x) = a_0+a_1x+ \ldots+a_nx^n$ has positive real coefficients, then all complex roots of $f$ lie in the annulus
\[r \le |z| \le R\]
where $r =\min \left\{\frac{a_i}{a_{i+1}}: 0 \le i \le n-1\right\}$ and $R =\max\left\{\frac{a_i}{a_{i+1}}: 0 \le i \le n-1\right\}$.\hfill \qed
\end{theorem}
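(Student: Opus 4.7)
The plan is to prove the upper bound $|z|\le R$ first, and then obtain the lower bound $|z|\ge r$ essentially for free by applying the upper bound to the reverse polynomial.

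For the upper bound, the key is to pass to the auxiliary polynomial $g(z)=(z-R)f(z)$. The hypothesis that $R=\max_i a_i/a_{i+1}$ translates precisely into the statement that, when $g$ is expanded, every coefficient of $z^k$ for $1\le k\le n$ is nonpositive, while the leading term $a_n z^{n+1}$ is positive and the constant term is $-Ra_0<0$. If $f$ had a root $z$ with $|z|>R$, then $g(z)=0$ too, so moving the leading and constant terms to one side and applying the triangle inequality to the other (which is a sum with nonnegative real coefficients) would give
\[
a_n|z|^{n+1}\le Ra_0+\sum_{k=1}^{n}(Ra_k-a_{k-1})|z|^k.
\]
The right-hand side is precisely $a_n|z|^{n+1}-(|z|-R)f(|z|)$, obtained by re-expanding $(|z|-R)f(|z|)$ at the positive real number $|z|$. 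So the inequality reduces to $(|z|-R)f(|z|)\le 0$, which contradicts $|z|>R>0$ together with the strict positivity of $f$ on the positive reals.

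For the lower bound, I would apply the upper bound to the reverse polynomial $\tilde f(z)=z^nf(1/z)=a_0z^n+a_1z^{n-1}+\cdots+a_n$, whose coefficients are still positive and whose associated $\tilde R$ parameter equals $\max_j a_{n-j}/a_{n-j-1}=1/r$. Because $a_0>0$, zero is not a root of $f$, so the nonzero roots of $\tilde f$ are exactly the reciprocals of the roots of $f$. The upper bound applied to $\tilde f$ shows that every root $w$ of $\tilde f$ satisfies $|w|\le 1/r$, which is equivalent to $|z|\ge r$ for every root $z$ of $f$.

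The main obstacle, or rather the key insight, is hitting on the multiplier $(z-R)$: once it is in hand, the hypothesis on $R$ forces exactly the sign pattern that makes the triangle-inequality argument close up cleanly, and no further cleverness is needed because the lower bound is obtained mechanically by the $z\mapsto 1/z$ reversal.
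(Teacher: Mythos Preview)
The paper does not prove this theorem: it is stated as a classical result, with a reference to \cite{EK} and a terminal \qed, and is then used as a tool. So there is no proof in the paper to compare against.

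Your argument is correct and is essentially the standard proof. The expansion of $(z-R)f(z)$ indeed gives
\[
g(z)=a_n z^{n+1}+\sum_{k=1}^{n}(a_{k-1}-Ra_k)z^{k}-Ra_0,
\]
and the definition of $R$ forces each $a_{k-1}-Ra_k\le 0$. From $g(z)=0$ and the triangle inequality one obtains $a_n|z|^{n+1}\le a_n|z|^{n+1}-(|z|-R)f(|z|)$, hence $(|z|-R)f(|z|)\le 0$, which contradicts $|z|>R$ and $f(|z|)>0$. The reversal $\tilde f(z)=z^n f(1/z)$ has coefficients $b_j=a_{n-j}$, so its upper-bound parameter is $\max_j b_j/b_{j+1}=\max_j a_{n-j}/a_{n-j-1}=1/r$, and since $a_0>0$ ensures $0$ is not a root of $f$, the reciprocal correspondence of roots gives $|z|\ge r$. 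Everything closes as you describe.
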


As a straightforward application of the Enestr\"om-Kakeya Theorem, consider the path $P_n$ of order $n\geq 3$.  The reduced Wiener polynomial of $P_n$ is given by
\[
\widehat{W}(P_n;x)=\sum_{i=1}^{n-1} (n-i)x^{i-1}.
\]
Thus by the Enestr\"om-Kakeya Theorem, the nonzero Wiener roots of $P_n$ lie in the annulus $1<\frac{n-1}{n-2}\leq |z| \leq 2$.  This improves a result from~\cite{di}.

We now turn our attention to the collection of all Wiener roots of connected graphs of order $n$.  We find the exact maximum modulus among all roots in this collection, and we describe the unique graph with a Wiener root of this maximum modulus.

\begin{theorem}
The maximum modulus among Wiener roots of connected graphs of order $n \geq 2$ is $\binom{n}{2}-1$. Moreover, for $n \geq 3$, $K_n-e$ is the unique graph of order $n$ with a Wiener root of this modulus.
\end{theorem}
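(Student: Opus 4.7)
The plan is to apply the Enestr\"om-Kakeya Theorem to the reduced Wiener polynomial $\widehat{W}(G;x)=d_1+d_2x+\cdots+d_Dx^{D-1}$ and then trace the equality cases carefully. The essential ingredient beyond Enestr\"om-Kakeya is the identity $\sum_{i=1}^D d_i=\binom{n}{2}$, which globally constrains the consecutive-coefficient ratios that appear in the theorem's bound.

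First I would bound the Enestr\"om-Kakeya quotient. For every $i\in\{1,\ldots,D-1\}$ the sum identity gives $d_i\leq\binom{n}{2}-d_{i+1}$, so
\[
\frac{d_i}{d_{i+1}}\leq\frac{\binom{n}{2}}{d_{i+1}}-1\leq\binom{n}{2}-1,
\]
using $d_{i+1}\geq 1$ (it is positive by the definition of $D$). Enestr\"om-Kakeya then yields $|z|\leq\binom{n}{2}-1$ for every Wiener root $z$ of a connected graph of order $n$. A direct computation confirms that $K_n-e$ realizes the bound: this graph has diameter $2$ with $d_1=\binom{n}{2}-1$ and $d_2=1$, so $\widehat{W}(K_n-e;x)=\binom{n}{2}-1+x$, whose unique root $1-\binom{n}{2}$ has modulus $\binom{n}{2}-1$.

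For uniqueness, suppose $G$ has order $n\geq 3$ and some Wiener root $z$ with $|z|=\binom{n}{2}-1$. Writing $R$ for the Enestr\"om-Kakeya quotient bound, we have $|z|\leq R\leq\binom{n}{2}-1$, so both inequalities must be equalities; in particular $d_i/d_{i+1}=\binom{n}{2}-1$ for some $i$. Chasing equality in the two estimates above forces $d_{i+1}=1$ and $d_j=0$ for every $j\notin\{i,i+1\}$. Since $d_1\geq 1$, we must have $i=1$, so $d_1=\binom{n}{2}-1$, $d_2=1$, and $D=2$; the only graph of order $n$ with $\binom{n}{2}-1$ edges is $K_n-e$.

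The step that deserves the most care is the chain of equalities in the uniqueness argument, since a priori a root of modulus $\binom{n}{2}-1$ need not force the Enestr\"om-Kakeya quotient $R$ itself to attain the upper bound $\binom{n}{2}-1$; however, the sandwich $|z|\leq R\leq\binom{n}{2}-1$ leaves no slack, after which the sum constraint on the $d_i$'s collapses $\widehat{W}(G;x)$ to a linear polynomial and thereby pins down $G$ uniquely. No machinery beyond Enestr\"om-Kakeya is needed.
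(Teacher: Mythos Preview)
Your proof is correct and follows essentially the same route as the paper: bound the consecutive-coefficient ratios of $\widehat{W}(G;x)$ via the constraint $\sum d_i=\binom{n}{2}$, apply Enestr\"om--Kakeya, exhibit $K_n-e$ as an extremal example, and trace the equality chain $|z|\le R\le\binom{n}{2}-1$ to force $D=2$, $d_1=\binom{n}{2}-1$, $d_2=1$. The only cosmetic omission is that you do not explicitly dispose of the complete graph (where $D=1$ and the only Wiener root is $0$), but this case is trivial and is implicitly excluded in your uniqueness argument since $|z|=\binom{n}{2}-1\ge 2$ there.
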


\begin{proof}
If $G$ is a complete graph, then $W(G;x) = nx$, whose only root is $0$.  So we may assume that $G$ is not complete, and hence has order $n\geq 3$ and diameter $D\geq 2$.  Let $z$ be any root of the reduced Wiener polynomial $\widehat{W}(G;x) =\sum_{i=1}^{D} d_ix^{i-1}$.  By the Enestr\"{o}m-Kakeya Theorem,
\[
|z| \le \max\left\{\tfrac{d_i}{d_{i+1}}: 1 \le i \le D-1\right\}.
\]
Since all the $d_{i}$ are at least $1$, $\sum_{i=1}^D d_i = \tbinom{n}{2}$, and $D \geq 2$, we observe that
\[
\max\left\{d_i: 1 \le i \le D\right\} \leq \tbinom{n}{2}-1 \ \ \ \mbox{ and } \ \ \ \min\left\{d_i : 1 \le i \le D\right\} \ge 1.
\]
Hence, by the Enestr\"{o}m-Kakeya Theorem, $|z| \le \binom{n}{2}-1$.  Finally, we note that
\[
W(K_{n}-e;x) = \left( \tbinom{n}{2}-1 \right) x + x^{2}
\]
has a root at $x =-\left(\binom{n}{2}-1\right)$.  Moreover, if $\max\{d_i/d_{i+1}\colon\ 1\leq i\leq D-1\} = \binom{n}{2}-1$, then $D = 2$, $d_{1} = \binom{n}{2}-1$ and $d_{2} = 1$, which implies that $G\cong K_{n}-e$.  We conclude that $K_n-e$ is the unique connected graph of order $n \geq 3$ with a Wiener root of modulus $\binom{n}{2}-1$.
\end{proof}

Thus, the growth of the maximum modulus among all Wiener roots of graphs of order $n$ is only $\Theta(n^2)$.  However, the problem is quite different for trees.  We show now that the growth rate of the maximum modulus among all Wiener roots of trees of order $n$ is $\Theta(n)$.  We begin with a lemma that will allow us to apply the Enestr\"{o}m-Kakeya Theorem directly.  Throughout, we use the fact that in any tree $T$, pairs at distance $k$ correspond exactly to paths of length $k$.

\begin{lemma}
\label{treelemmamodulus}
Let $T$ be a tree of order $n \ge 3$ with diameter $D$. Then
\[
\frac{d_k(T)}{d_{k+1}(T)} \le 2(n-D)
\]
for $1 \le k < D$.
\end{lemma}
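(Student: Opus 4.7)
The plan is to re-cast the ratio bound as a counting inequality and then handle ``leaf-leaf'' paths separately.

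First, I would establish the identity
\[
2\, d_{k+1}(T) \;=\; \sum_{P\colon |P|=k} \bigl[(\deg(u)-1) + (\deg(v)-1)\bigr],
\]
where the sum runs over length-$k$ paths $P$ with endpoints $u,v$. This follows by double-counting pairs $(P,e)$ for which $P\cup\{e\}$ is a length-$(k+1)$ path: each length-$(k+1)$ path contributes exactly two such pairs (one per removed endpoint), and each length-$k$ path $P$ admits $(\deg(u)-1)+(\deg(v)-1)$ valid extensions (the tree property prevents any further neighbour of $u$ or $v$ from lying on $V(P)$). Since every summand is a non-negative integer that vanishes precisely when both endpoints of $P$ are leaves, we immediately obtain $d_k(T)-L_k(T)\le 2\,d_{k+1}(T)$, where $L_k(T)$ is the number of length-$k$ paths whose endpoints are both leaves of $T$. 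The lemma therefore reduces to proving
\[
L_k(T) \;\le\; 2(n-D-1)\,d_{k+1}(T).
\]

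To prove this reduced bound I would fix a diametral path $P^*=w_0 w_1\cdots w_D$ of $T$. Its $D-1$ internal vertices are all non-leaves, so $T$ has at most $n-D+1$ leaves in total and at most $n-D-1$ leaves off $P^*$. The crucial structural observation is that, for any leaf-leaf path $P$ of length $k<D$, at least one endpoint of $P$ must lie off $P^*$: otherwise both endpoints belong to $\{w_0,w_D\}$ and are at distance $D$ from each other, contradicting $k<D$. I would then construct an injection
\[
\bigl\{\text{leaf-leaf paths of length }k\bigr\} \;\hookrightarrow\; \bigl\{\text{length-}(k+1)\text{ paths in }T\bigr\}\times\bigl\{\text{off-}P^*\text{ leaves of }T\bigr\}\times\{1,2\},
\]
sending each leaf-leaf path $P$ to a triple $(Q,\ell,\epsilon)$ in which $\ell$ is a canonically chosen off-$P^*$ leaf-endpoint of $P$ and $Q$ is a length-$(k+1)$ path obtained by modifying $P$ near $\ell$, with $\epsilon\in\{1,2\}$ recording the auxiliary information needed to reverse the modification. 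Since the codomain has size at most $2(n-D-1)\,d_{k+1}(T)$, this yields the desired bound.

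The main obstacle is constructing this injection cleanly and verifying injectivity. When the other endpoint $x'$ of $P$ lies on $P^*$ (so $x'\in\{w_0,w_D\}$ and thus has eccentricity $D\ge k+1$), one can take $Q$ to be the length-$(k+1)$ subpath of $P^*$ starting at $x'$; but when both endpoints of $P$ lie off $P^*$, extending along $P^*$ is no longer straightforward and a different modification rule is required. Canonicalising the choice of $\ell$ (when both endpoints are off $P^*$) and specifying how to recover $P$ from the triple $(Q,\ell,\epsilon)$ will require a careful case analysis based on where $\ell$'s pendant subtree attaches to $P^*$; this is the technical heart of the proof.
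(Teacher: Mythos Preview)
Your approach is genuinely different from the paper's, which proceeds by induction on $n$: one deletes a leaf $v$ from a diametral path and splits into two cases according to whether $\diam(T-v)$ drops, comparing $d_k(T)$ with $d_k(T-v)$ and invoking the induction hypothesis directly. Your extension identity $2d_{k+1}(T)=\sum_{|P|=k}\bigl[(\deg u-1)+(\deg v-1)\bigr]$ is correct, and the resulting reduction of the lemma to the inequality $L_k(T)\le 2(n-D-1)\,d_{k+1}(T)$ is a clean and attractive reformulation.

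However, the proposed injection has a genuine gap that goes beyond missing detail. You treat only the case where one endpoint of the leaf--leaf path lies on $P^*$, and explicitly defer the case where both endpoints are off $P^*$. Even your easy-case rule is underspecified (when $k=D-1$ both choices of $x'$ yield $Q=P^*$, so $\epsilon$ must carry the distinction, and you never say how images from the two cases are kept disjoint). More importantly, the unspecified ``canonical'' choice of $\ell$ in the hard case is not cosmetic. Take $P^*=w_0w_1w_2w_3w_4$, attach one leaf $\ell$ at $w_1$ and $c$ leaves $\gamma_1,\dots,\gamma_c$ at $w_2$. Then $D=4$, $d_4=2$, and the length-$3$ leaf--leaf paths $\{\ell,\gamma_i\}$ have both endpoints off $P^*$. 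If your canonical rule selects $\ell$ for these, all $c$ of them must land among the triples $(Q,\ell,\epsilon)$, of which there are only $2d_4=4$; the map fails to be injective once $c\ge 5$. Choosing $\gamma_i$ instead happens to work here, but that is exactly the point: the whole argument rests on a selection rule you have not supplied, and a wrong rule breaks it. As written, the proposal reduces the lemma to a plausible inequality but does not prove it.
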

\begin{proof}
We proceed by induction on $n$. If $n=3$, then $T\cong P_3,$ so $D=2,$ $d_1(T)=2,$ and $d_2(T)=1$. The result readily follows in this case.

Now assume, for some $n > 3$, that the result holds for all trees with  $n-1$ vertices. Let $T$ be a tree of order $n>3$ and with diameter $D$. Let $1 \le k < D$. A simple induction on $n$ shows that $d_2 \ge n-2$, so that
\[ \frac{d_1}{d_{2}} \leq \frac{n-1}{n-2} = 1 + \frac{1}{n-2} < 2 \leq 2(n-D),\]
so we can assume $k \ge 2$. There are two cases to consider.

\medbreak

\noindent
{\bf Case 1:} {\itshape There is a leaf $v$ on a longest path of $T$ such that $T-v$ has diameter $D$.}

\noindent
Thus, $D\leq n-2$.  Let $u$ be the neighbour of $v$ in $T$ and let $T' = T-v$. Let $d_{\ell}(T',u)$ denote the number of paths of length $\ell \ge 1$ in $T'$ that have $u$ as end-vertex. Observe that since $v$ was on a diametral path of $T$ (that is, a shortest path of length $\diam(G)$), there is some path of length $D-1$ extending from $u$ in $T'$, so $d_\ell(T',u)\geq 1$ for all $l\in\{1,2,\dots,D-1\}$.  Further, this implies that there are at most $n-1-(D-1)=n-D$ vertices at distance $\ell$ from $u$ in $T'$ for $\ell \in \{1,\ldots,D-1\}$.  All together, we have $1\leq d_\ell(T',u)\leq n-D$ for all $\ell\in\{1,2,\dots,D-1\}.$  Using this fact and the induction hypothesis we see that
\begin{align*}
d_k(T)&= d_k(T') + d_{k-1}(T',u)\\
& \le 2(n-1-D)d_{k+1}(T') + d_{k-1}(T',u)\\
&=2(n-1-D)\left[d_{k+1}(T)-d_k(T',u)\right]+d_{k-1}(T',u)\\
&=2(n-1-D)d_{k+1}(T) -2(n-1-D)d_k(T',u) + d_{k-1}(T',u)\\
&\le 2(n-1-D)d_{k+1}(T) -2(n-1-D) + n-D\\
&\le 2(n-1-D)d_{k+1}(T)-(n-D-2)\\
&\le 2(n-1-D)d_{k+1}(T)\\
& < 2(n-D)d_{k+1}(T).
\end{align*}
This holds for all $k$, $2 \le k < D$.

\medbreak

\noindent{\bf Case 2:} {\itshape If $v$ is any leaf vertex on a longest path of $T$, then $\diam(T-v) = D-1.$}

\noindent
It follows that there is a unique diametral path in $T$, say between leaves $u$ and $v$, and both $u$ and $v$ are adjacent to a vertex of degree $2$ in $T$.  Let $T'=T-v$, which has order $n-1$ and diameter $D-1$. If $2\le k \le D-2$, then the induction hypothesis applies to $T'$, and the argument is similar to that of Case 1:
\begin{align*}
d_k(T)&= d_k(T') + d_{k-1}(T',u)\\
& \le 2((n-1)-(D-1))d_{k+1}(T') + d_{k-1}(T',u)\\
&=2(n-D)\left[d_{k+1}(T)-d_k(T',u)\right]+d_{k-1}(T',u)\\
&=2(n-D)d_{k+1}(T) -2(n-D)d_k(T',u) + d_{k-1}(T',u)\\
&\le 2(n-D)d_{k+1}(T) -2(n-D) + n-D\\
&\le 2(n-D)d_{k+1}(T)-(n-D)\\
&< 2(n-D)d_{k+1}(T).
\end{align*}
Suppose now that $k=D-1$.  Since $T'$ has diameter $D-1$, the induction hypothesis does not apply as above.  However, we know that $d_D(T)=1,$ so it suffices to show that $d_{D-1}(T)\leq 2(n-D)$.

First, we show that every path of length $D-1$ in $T$ has an end in $\{u,v\}$.  Suppose otherwise that there is a pair of vertices $\{x,y\}$ at distance $D-1$ in $T$, with $\{x,y\}\cap\{u,v\}=\emptyset.$  Let $P:x = w_{1},w_{2},\ldots,w_{D}=y$ be the path between $x$ and $y$, let $P_{u}$ be the shortest path from $u$ to $P$, say meeting $P$ in vertex $w_{i}$ and similarly define $P_{v}$ as the shortest path from $v$ to $P$, meeting $P$ in vertex $w_{j}$; without loss, $i \leq j$ (see Figure~\ref{pathdiagram}). As the path between $u$ and $v$ is the unique diametral path, it must be the case that the path $P_{u}$ is at most the length of the path between $x$ and $w_{i}$ (otherwise, the path between $u$ and $y$ would be a second diametral path). Likewise, the path $P_{v}$ is at most the length between $v$ and $w_{j}$. However, then the path between $u$ and $v$ would have length at most that of $P$, which is $D-1$, a contradiction.

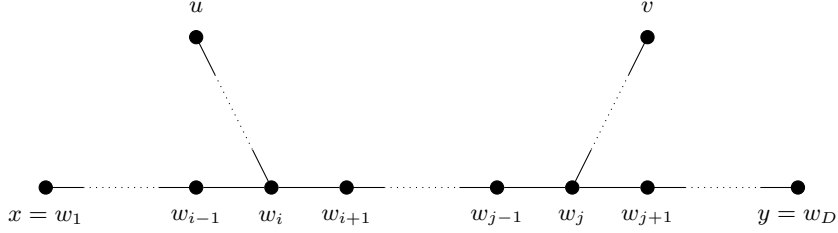
\begin{figure}[ht]
\centering
\begin{tikzpicture}
\vertex (1) at (0,0) {};
\vertex (2) at (2,0) {};
\vertex (3) at (3,0) {};
\vertex (4) at (4,0) {};
\vertex (5) at (6,0) {};
\vertex (6) at (7,0) {};
\vertex (7) at (8,0) {};
\vertex (8) at (10,0) {};
\vertex (9) at (2,2) {};
\vertex (10) at (8,2) {};
\path
(2) edge (3)
(3) edge (4)
(5) edge (6)
(6) edge (7);
\draw (0,0) -- (0.5,0);
\draw[dotted] (0.5,0) -- (1.5,0);
\draw (1.5,0) -- (2,0);
\draw (4,0) -- (4.5,0);
\draw[dotted] (4.5,0) -- (5.5,0);
\draw (5.5,0) -- (6,0);
\draw (8,0) -- (8.5,0);
\draw[dotted] (8.5,0) -- (9.5,0);
\draw (9.5,0) -- (10,0);
\draw (2,2) -- (2.25,1.5);
\draw[dotted] (2.25,1.5) -- (2.75,0.5);
\draw (2.75,0.5) -- (3,0);
\draw (8,2) -- (7.75,1.5);
\draw[dotted] (7.75,1.5) -- (7.25,0.5);
\draw (7.25,0.5) -- (7,0);
\draw (0,-0.4) node {\footnotesize $x=w_1$};
\draw (2,-0.4) node {\footnotesize $w_{i-1}$};
\draw (3,-0.4) node {\footnotesize $w_{i}$};
\draw (4,-0.4) node {\footnotesize $w_{i+1}$};
\draw (6,-0.4) node {\footnotesize $w_{j-1}$};
\draw (7,-0.4) node {\footnotesize $w_{j}$};
\draw (8,-0.4) node {\footnotesize $w_{j+1}$};
\draw (10,-0.4) node {\footnotesize $y=w_D$};
\draw (2,2.4) node {\footnotesize $u$};
\draw (8,2.4) node {\footnotesize $v$};
\end{tikzpicture}
\caption{The shortest $x$--$y$ path meets the shortest $u$--$v$ path.}
\label{pathdiagram}
\end{figure}

Thus every pair of vertices at distance $D-1$ in $T$ contains either $u$ or $v$.  But since there is a path of length $D$ between $u$ and $v$, and exactly one vertex on this path is at distance $D-1$ from $u$, there are at most $n-D$ vertices at distance $D-1$ from $u$.  Similarly, there are at most $n-D$ vertices at distance $D-1$ from $v$.  Therefore, $d_{D-1}(T)\leq 2(n-D),$ as desired.
\end{proof}

Note that Lemma~\ref{treelemmamodulus} implies immediately that $\tfrac{d_k(T)}{d_{k+1}(T)}\leq 2(n-2)$ for any tree $T$ of order $n\geq 3$ and any $1\leq k<D(T)$, since $D(T)\geq 2$ for any tree of order at least $3$.  We can prove a stronger result for trees of order $n\geq 5$. We use the following notation. For each $2\leq i\leq \lfloor\tfrac{n}{2}\rfloor$, let $D_{i,n-i}$ denote the double star of order $n$ with $i-1$ leaves attached to one internal vertex and $n-i-1$ leaves attached to the other (see Figure~\ref{doublestar}). Note that all double stars of order $n$ have this form for some $2\leq i\leq \lfloor\tfrac{n}{2}\rfloor$.

\begin{figure}[ht]
\centering
\begin{tikzpicture}
\vertex (1) at (1,0) {};
\vertex (2) at (2,0) {};
\vertex (3) at (0,0.5) {};
\vertex (4) at (0,0.2) {};
\vertex (5) at (0,-0.5) {};
\vertex (6) at (3,0.7) {};
\vertex (7) at (3,0.4) {};
\vertex (8) at (3,-0.7) {};
\path
(1) edge (2)
(1) edge (3)
(1) edge (4)
(1) edge (5)
(2) edge (6)
(2) edge (7)
(2) edge (8);
\draw[dotted] (0,0) -- (0,-0.3);
\draw[dotted] (3,0.1) -- (3,-0.4);
\draw [decorate,decoration={brace,amplitude=4pt,mirror},xshift=-4pt,yshift=0pt]
(0,0.65) -- (0,-0.65) node [left,black,midway,xshift=-3pt]
{\footnotesize $i-1$};
\draw [decorate,decoration={brace,amplitude=4pt},xshift=4pt,yshift=0pt]
(3,0.85) -- (3,-0.85) node [right,black,midway,xshift=3pt]
{\footnotesize $n-i-1$};
\end{tikzpicture}
\caption{The double star $D_{i,n-i}$.}
\label{doublestar}
\end{figure}
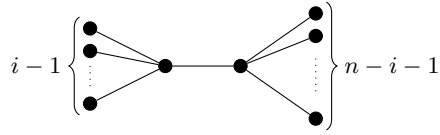

\begin{corollary}\label{UpperBound}
Let $T$ be a tree of order $n\geq 5$.  Then
\[
\tfrac{d_k}{d_{k+1}}\leq 2(n-4)
\]
for $1\leq k<D,$ where $D$ is the diameter of $T$.
\end{corollary}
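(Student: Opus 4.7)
My plan is to split into cases based on the diameter $D$ of $T$. When $D \geq 4$, Lemma~\ref{treelemmamodulus} immediately gives $d_k/d_{k+1} \leq 2(n-D) \leq 2(n-4)$, so there is nothing to do. The remaining work is to handle the low-diameter regime $D \leq 3$, where $T$ must be either the star $K_{1,n-1}$ (if $D=2$) or one of the double stars $D_{i,n-i}$ (if $D=3$); both families are small enough to admit a direct computation.

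The star case will be immediate: $d_1 = n-1$ and $d_2 = \binom{n-1}{2}$, so $d_1/d_2 = 2/(n-2)$, which is at most $2(n-4)$ precisely when $(n-2)(n-4) \geq 1$, i.e., for $n \geq 5$. For a double star with $k = 1$, the elementary bound $d_2 \geq n - 2$ (the base observation already used in the proof of Lemma~\ref{treelemmamodulus}) gives $d_1/d_2 \leq (n-1)/(n-2) < 2 \leq 2(n-4)$ for $n \geq 5$, so this sub-case is also quickly handled.

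The substantive case is $D = 3$ with $k = 2$. Writing $a = i-1$ and $b = n-i-1$, a direct count in $D_{i,n-i}$ will yield $d_2 = (n-2) + \binom{a}{2} + \binom{b}{2}$ and $d_3 = ab$, where $a, b \geq 1$ and $s := a + b = n-2 \geq 3$. Setting $p = ab$, the desired inequality $d_2 \leq 2(n-4)\,d_3$ rearranges to $s^2 + s \leq 2p(2s-3)$. Because $a, b$ are positive integers with $a+b = s$, the identity $(a-1)(b-1) \geq 0$ gives $p \geq s-1$, so it suffices to verify $s^2 + s \leq 2(s-1)(2s-3)$, i.e., $3s^2 - 11s + 6 \geq 0$. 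This factors as $(s-3)(3s-2) \geq 0$, which holds for every $s \geq 3$, with equality exactly at $s = 3$ (corresponding to $D_{2,3}$ on $5$ vertices, confirming that the hypothesis $n \geq 5$ is sharp). The only mild obstacle is recognizing that the extremal configuration is the unbalanced double star (one internal vertex carrying a single leaf), which is what forces using the bound $p \geq s-1$ rather than any weaker estimate.
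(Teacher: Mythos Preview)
Your proof is correct and follows essentially the same approach as the paper: split on the diameter, apply Lemma~\ref{treelemmamodulus} when $D\geq 4$, and handle $D=2$ (star) and $D=3$ (double star) by direct computation. The only noteworthy difference is in the $D=3$, $k=2$ sub-case: the paper bounds $\binom{i}{2}+\binom{n-i}{2}\le 2\binom{n-i}{2}$ and simplifies to $(n-i)/(i-1)\le n-2\le 2(n-4)$, which requires $n\ge 6$ and leaves $n=5$ as a separate check, whereas your substitution $a=i-1$, $b=n-i-1$, $s=a+b$, $p=ab$ together with $p\ge s-1$ reduces everything to the single quadratic $(s-3)(3s-2)\ge 0$ and handles $n=5$ uniformly (with equality).
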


\begin{proof}
By Lemma~\ref{treelemmamodulus},
\[
\tfrac{d_k}{d_{k+1}}\leq 2(n-D)
\]
for all $1\leq k<D$, so if $D\geq 4$ then we are done.  Since $n\geq 5,$ we only need to consider the cases $D=2$ and $D=3.$

\medbreak

\noindent{\bf Case 1: }{\itshape $D=2$.}

\noindent
 Then $T\cong K_{1,n-1},$ and
\[
W(T;x)=(n-1)x+\binom{n-1}{2}x^2.
\]
Thus $\tfrac{d_1}{d_2}=\tfrac{2}{n-2}<2(n-4)$ for all $n\geq 5.$

\medbreak

\noindent{\bf Case 2: }{\itshape $D=3$.}

\noindent Then $T=D_{i, n-i}$ for some $2 \le i \le \lfloor \frac{n}{2} \rfloor$. 
By straightforward counting, we find
\[
W(D_{i,n-i};x)=(n-1)x+\left[\binom{i}{2}+\binom{n-i}{2}\right]x^2+(i-1)(n-i-1)x^3.
\]
Thus
\[
\frac{d_1}{d_2}=\frac{n-1}{\binom{i}{2}+\binom{n-i}{2}}<\frac{n-1}{2}\leq 2(n-4) \ \ \ \ \ \mbox{ for } n\geq 5,
\]
and
\[
\frac{d_2}{d_3}=\frac{\binom{i}{2}+\binom{n-i}{2}}{(i-1)(n-i-1)}\leq \frac{2\binom{n-i}{2}}{(i-1)(n-i-1)}=\frac{n-i}{i-1}\leq n-2\leq 2(n-4) \ \ \mbox{ for } n\geq 6.
\]
Finally, when $n=5$, we necessarily have $i=2$, and we compute directly that $\tfrac{d_2}{d_3}=2,$ which satisfies the desired bound.
\end{proof}

Now for any $n\geq 5$, the upper bound of Corollary~\ref{UpperBound} is also an upper bound on the modulus of any Wiener root of a tree of order $n$ by the Enestr\"om-Kakeya Theorem.

\begin{corollary}\label{TreeRootBound}
Let $T$ be a tree of order $n\geq 5$.  If $z$ is a Wiener root of $T$ then $|z|\leq 2(n-4)$.  \hfill \qed
\end{corollary}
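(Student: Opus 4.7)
The statement is essentially an immediate consequence of Corollary~\ref{UpperBound} combined with the Enestr\"om--Kakeya Theorem, so the plan is quite short. First I would observe that $z=0$ trivially satisfies $|z|\leq 2(n-4)$ for $n\geq 5$, so it suffices to bound the nonzero Wiener roots. These are exactly the roots of the reduced Wiener polynomial
\[
\widehat{W}(T;x) = d_1 + d_2 x + \cdots + d_D x^{D-1},
\]
whose coefficients $d_1,d_2,\ldots,d_D$ are all positive integers (since $T$ is connected and each distance $1\leq i\leq D$ is realized by some pair of vertices).

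With positivity of the coefficients in hand, the Enestr\"om--Kakeya Theorem applies directly to $\widehat{W}(T;x)$ and yields
\[
|z| \leq \max\left\{\tfrac{d_k}{d_{k+1}} : 1\leq k < D\right\}
\]
for every root $z$ of $\widehat{W}(T;x)$. By Corollary~\ref{UpperBound}, each ratio $d_k/d_{k+1}$ with $1\leq k<D$ is at most $2(n-4)$, so the maximum above is bounded by $2(n-4)$. Combining these two inequalities gives $|z|\leq 2(n-4)$, which completes the argument. There is no serious obstacle here; the only thing to be careful about is confirming that Enestr\"om--Kakeya applies to $\widehat{W}(T;x)$ rather than to $W(T;x)$ itself (the latter has $a_0=0$, which would make the lower bound in the theorem meaningless, but the upper bound we need only requires positivity of the coefficients of the reduced polynomial).
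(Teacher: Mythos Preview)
Your proposal is correct and matches the paper's approach exactly: the paper also derives the corollary immediately from Corollary~\ref{UpperBound} via the Enestr\"om--Kakeya Theorem applied to the reduced Wiener polynomial, and indeed states it with just a \qed. Your extra care in noting that $z=0$ is trivial and that Enestr\"om--Kakeya should be applied to $\widehat{W}(T;x)$ rather than $W(T;x)$ is a welcome clarification but not a departure from the paper's reasoning.
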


While we have observed that the bound of Corollary~\ref{TreeRootBound} is not tight for small values of $n$, we demonstrate in the next theorem that there is a tree $T_n$ (see Figure~\ref{treeexamplemaxmod}) of order $n$ that has a Wiener root whose modulus approaches $\left(1+\tfrac{1}{\sqrt{2}}\right)n$ asymptotically.  This means that the growth of the maximum modulus, among all Wiener roots of trees of order $n$, is  $\Theta(n)$.  We have verified that $T_n$ is the unique tree with Wiener root of maximum modulus among all trees of order $n$ for $5\leq n\leq 17,$ and suspect that this is true for all $n\geq 5.$

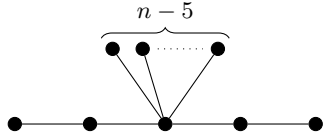
\begin{figure}[ht]
\centering
\begin{tikzpicture}
\vertex (1) at (0,0) {};
\vertex (2) at (1,0) {};
\vertex (3) at (2,0) {};
\vertex (4) at (3,0) {};
\vertex (5) at (4,0) {};
\vertex (6) at (1.3,1) {};
\vertex (7) at (1.7,1) {};
\vertex (8) at (2.7,1) {};
\path
(1) edge (2)
(2) edge (3)
(3) edge (4)
(4) edge (5)
(3) edge (6)
(3) edge (7)
(3) edge (8);
\draw[dotted] (1.9,1) -- (2.5,1);
\draw [decorate,decoration={brace,amplitude=4pt},yshift=4pt]
(1.15,1) -- (2.85,1) node [above,black,midway,yshift=3pt]
{\footnotesize $n-5$};
\end{tikzpicture}
\caption{The tree $T_{n}$.}
\label{treeexamplemaxmod}
\end{figure}

\begin{theorem}
For $n \geq 6$, the tree $T_n$ has a real Wiener root in the interval
\[
\left(-\left( 1 + \tfrac{1}{\sqrt{2}}\right)n+7, -\left( 1 + \tfrac{1}{\sqrt{2}}\right)n + 8\right).
\]
\end{theorem}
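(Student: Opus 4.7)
The plan is to apply the Intermediate Value Theorem to the reduced Wiener polynomial $\widehat{W}(T_n;x)$, showing that its values at the two endpoints $-cn+7$ and $-cn+8$ (with $c = 1 + \tfrac{1}{\sqrt{2}}$) have opposite signs. First, by directly counting unordered pairs of vertices at distance $1,2,3,4$ in $T_n$ (whose diameter is $4$, with $n-5$ leaves attached at the central vertex of its longest path), one obtains
\[
\widehat{W}(T_n;x) = x^3 + (2n-8)\,x^2 + \tfrac{n^2-7n+16}{2}\,x + (n-1),
\]
which I would then reorganize as a polynomial in $n$ with coefficients in $\mathbb{Q}[x]$:
\[
\widehat{W}(T_n;x) = \tfrac{x}{2}\,n^2 + \bigl(2x^2 - \tfrac{7x}{2} + 1\bigr)\,n + \bigl(x^3 - 8x^2 + 8x - 1\bigr).
\]

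The constant $c$ is forced by leading-order balance: substituting $x = -cn$ into the display above, the $n^3$-contribution is $-c(c^2 - 2c + \tfrac{1}{2})\,n^3$, which vanishes precisely when $c = 1 \pm \tfrac{1}{\sqrt{2}}$. Substituting instead $x = -cn + r$ (for fixed real $r$) into the display and simplifying via $c^2 = 2c - \tfrac{1}{2}$, the $n^3$ terms cancel for every $r$, leaving a quadratic in $n$. Carrying out the substitution at $r = 7$ and $r = 8$, I expect to arrive at
\[
\widehat{W}(T_n;\,-cn+7) = \tfrac{3(\sqrt{2}-2)}{4}\,n^2 + \tfrac{63-43\sqrt{2}}{2}\,n + 6,
\]
\[
\widehat{W}(T_n;\,-cn+8) = \tfrac{7\sqrt{2}-2}{4}\,n^2 + (29-36\sqrt{2})\,n + 63.
\]

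Finally, the sign analysis is routine. For $r = 7$, the leading coefficient is negative (since $\sqrt{2} < 2$), so to conclude negativity for all $n \geq 6$ it suffices to verify the value at $n=6$: direct substitution yields $141 - 102\sqrt{2}$, which is negative because $141^2 = 19881 < 20808 = 2\cdot 102^2$. For $r = 8$, the leading coefficient $(7\sqrt{2}-2)/4$ is positive, and the discriminant simplifies to $3559 - 2529\sqrt{2}$, which is negative because $3559^2 = 12{,}666{,}481 < 12{,}791{,}682 = 2\cdot 2529^2$, so that quadratic is strictly positive for every real $n$. Combining the two sign conclusions with the Intermediate Value Theorem yields a real root of $\widehat{W}(T_n;x)$ in the open interval $(-cn+7,\,-cn+8)$, which (being nonzero) is in turn a Wiener root of $T_n$. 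The only real obstacle is the arithmetic bookkeeping needed to obtain the two displayed quadratics cleanly; once they are in hand, the sign verifications reduce to a pair of explicit integer comparisons.
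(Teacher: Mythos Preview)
Your approach is exactly the paper's: compute $\widehat{W}(T_n;x)$, evaluate at the two endpoints, and invoke the Intermediate Value Theorem. Your two evaluated quadratics agree with the paper's verbatim, and you actually go further than the paper by supplying explicit integer-comparison sign checks where the paper simply asserts the signs.

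One small logical slip to patch in the $r=7$ case: a negative leading coefficient together with negativity at $n=6$ does \emph{not} by itself force negativity for all $n\ge 6$; a downward-opening parabola could in principle have both roots to the right of $6$. You need one extra observation---for instance, that the vertex $-b/(2a)$ of your quadratic lies below $6$ (with your coefficients it sits near $n\approx 1.24$), or equivalently that the derivative is already negative at $n=6$. Once that is noted, negativity at $n=6$ does propagate to all $n\ge 6$, and the rest of your argument goes through.
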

\begin{proof}

Observe that
\begin{align*}
\widehat{W}(T_{n};x) & = n-1 + \left( \tbinom{n-3}{2} +2 \right) x + 2(n-4) x^{2} + x^{3}
\end{align*}
In particular,
\[
\widehat{W}\left(T_{n};-\left( 1 + \tfrac{1}{\sqrt{2}}\right)n + 7\right) = \left( \frac{3}{4} \sqrt{2}-\frac{3}{2}\right)n^2+\left(-\frac{43}{2} \sqrt{2}+\frac{63}{2} \right)n+6,
\]
which is negative for $n \geq 6$, and
\[
\widehat{W}\left(T_{n};-\left( 1 + \tfrac{1}{\sqrt{2}}\right)n + 8\right)  = \left( \frac{7}{4}\sqrt{2}-\frac{1}{2}\right)n^2+(-36\sqrt{2}+29)n+63,
\]
which is positive for all $n$. By the Intermediate Value Theorem, $\widehat{W}(T_{n};x)$, and hence $W(T_n;x)$, has a root in the interval $\left(-\left( 1 + \tfrac{1}{\sqrt{2}}\right)n+7, -\left( 1 + \tfrac{1}{\sqrt{2}}\right)n + 8\right)$ for all $n \geq 6$.
\end{proof}

Having obtained an upper bound on the modulus of any Wiener root of a graph (or tree) of order $n$, we now turn to a lower bound.  Of course, the Wiener root of smallest modulus among all connected graphs of order $n$ is $0$, as $0$ is a root of every Wiener polynomial, but what about the {\em nonzero} root of smallest modulus? Equivalently, what is the smallest modulus of a root of the reduced Wiener polynomial of a connected graph of order $n$?  Below we prove a lower bound on the ratio of successive coefficients of the Wiener polynomial which allows us to apply the Enestr\"om-Kakeya Theorem once more.

\begin{lemma}\label{RatioLowerBound}
Let $G$ be a connected graph of order $n\geq 3$ and diameter $D\geq 2$.  Then
\[
\frac{d_k}{d_{k+1}} \geq \frac{2}{n-k-1}
\]
for $1\leq k<D.$
\end{lemma}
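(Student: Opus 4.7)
The plan is a double-counting argument that maps each pair of vertices at distance $k+1$ to two pairs at distance $k$ obtained by truncating a shortest path, and then bounds how many pairs at distance $k+1$ can be ``produced'' in this way by a single pair at distance $k$. Concretely, for each unordered pair $\{u,v\}$ with $d(u,v)=k+1$, I would fix a shortest $u$--$v$ path $u=u_0,u_1,\ldots,u_{k+1}=v$ and extract the two subpath pairs $\{u_0,u_k\}$ and $\{u_1,u_{k+1}\}$. Both pairs are at distance exactly $k$ because any subpath of a geodesic is itself a geodesic, and they are distinct since $u_0\neq u_1$ and $u_0\neq u_{k+1}$. Thus the total number of (pair at distance $k+1$, extracted pair at distance $k$) incidences is exactly $2d_{k+1}$.

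Next I would estimate, for a fixed pair $\{a,b\}$ at distance $k$, how many pairs at distance $k+1$ can have $\{a,b\}$ among their two extracted subpath pairs. Unpacking the two ways this can happen shows that such a pair at distance $k+1$ must be of the form $\{a,w\}$ with $w\in N(b)$ and $d(a,w)=k+1$, or of the form $\{b,w\}$ with $w\in N(a)$ and $d(b,w)=k+1$. Writing $V_{k+1}(x)$ for the set of vertices at distance exactly $k+1$ from $x$, the count is at most
\[
|N(b)\cap V_{k+1}(a)|+|N(a)\cap V_{k+1}(b)|.
\]

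The crux of the argument, which is also the main obstacle, is to show that this sum is at most $n-k-1$ rather than the naive $2(n-k-1)$ one would get by bounding each summand separately; a factor-of-two loss here would only yield $d_k/d_{k+1}\geq 1/(n-k-1)$, which is too weak and would not even be attained by $K_{1,n-1}$ (for which $d_1/d_2=2/(n-2)$). For the sharper bound I would first observe that the two sets are disjoint: a vertex in both would be adjacent to both $a$ and $b$, hence at distance at most $1$ from each, contradicting the required distance $k+1\geq 2$. I would then fix any shortest $a$--$b$ path and note that each of its $k+1$ vertices lies at distance at most $k$ from both $a$ and $b$, and therefore cannot appear in either set. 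Hence the union of the two disjoint sets sits inside an $(n-k-1)$-element complement, giving the desired bound. Combining this with the extraction count yields $2d_{k+1}\leq(n-k-1)d_k$, which rearranges to the statement of the lemma.
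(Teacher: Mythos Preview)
Your proof is correct and follows essentially the same double-counting strategy as the paper: both arguments associate to each pair $\{x,y\}$ at distance $k+1$ two pairs at distance $k$ obtained by chopping one end off a fixed shortest $x$--$y$ path, and both bound the contribution of a fixed distance-$k$ pair $\{a,b\}$ by observing that the ``extra'' vertex must lie off a shortest $a$--$b$ path, yielding at most $n-k-1$ choices. The only cosmetic difference is bookkeeping---the paper packages the count as ordered pairs $(w,\{u,v\})$ in a set $\mathcal{S}_k$ and verifies that the $2d_{k+1}$ generated elements are distinct, whereas you count incidences directly and instead verify that the two sets $N(b)\cap V_{k+1}(a)$ and $N(a)\cap V_{k+1}(b)$ are disjoint; these are two sides of the same observation.
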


\begin{proof}
Let $k\in\{1,\dots,D-1\}.$
We count ordered pairs $(w,\{u,v\})$, where $d(u,v)=k$ and either $w$ is joined to $u$ and at distance $k+1$ from $v$, or $w$ is joined to $v$ and at distance $k+1$ from $u$; let ${\mathcal S}_{k}$ be the set of all such ordered pairs. Clearly, $|S_k|$ is at most $(n-(k+1))d_{k} = (n-k-1)d_{k}$, as for any pair of vertices $\{u,v\}$ at distance $k$, the $k+1$ vertices on any shortest path between $u$ and $v$ cannot serve as the point $w$.

On the other hand, for any pair of vertices $x$ and $y$ at distance $k+1$, fix a path $P$ of this length between them, with vertex $x_1$ adjacent to $x$ on $P$, and $y_1$ adjacent to $y$ on $P$ (it may be the case that $x_1 = y_1$, if $k=1$). Then we can form two distinct ordered pairs, $(x,\{x_1,y\})$ and  $(y,\{x,y_1\})$, both of which belong to ${\mathcal S}_{k}$.  Working over all pairs of vertices at distance $k+1$, we generate $2d_{k+1}$ pairs through this process, and we claim that they are all distinct.  Note first of all that the first coordinate in any ordered pair generated in this manner is adjacent to precisely one element of the second coordinate.  Suppose that two pairs $\{x,y\}$ and $\{x',y'\}$ at distance $k+1$ yield the same member of ${\mathcal S}_k$, say $(x,\{x_1,y\})$.  Then either $x'=x$ or $y' = x$.  Without loss of generality, assume that $x'=x$.  Since $x$ is adjacent to $x_1$ and $d(x',y')=k+1>1$, it follows that $y'\neq x_1,$ and thus $y'=y$. We conclude that
\[
(n-k-1)d_{k} \geq |{\mathcal S}_{k}| \geq 2d_{k+1}. \qedhere
\]
\end{proof}

\begin{corollary}
Among all connected graphs of order $n \geq 3$, the graph $K_{1,n-1}$ has nonzero Wiener root of mimimum modulus $\tfrac{2}{n-2}$. Moreover, $K_{1,n-1}$ is the unique graph of order $n$ with a Wiener root of this modulus for all $n\geq 3$.
\end{corollary}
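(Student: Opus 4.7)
The plan is to combine Lemma~\ref{RatioLowerBound} with the Enestr\"om-Kakeya Theorem to obtain the lower bound $|z|\ge 2/(n-2)$, and then to analyze the equality case carefully to pin down $K_{1,n-1}$ as the unique extremal graph.

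If $G$ is complete, then $\widehat{W}(G;x)$ is a nonzero constant and has no roots, so I may assume $G$ has diameter $D\ge 2$. Lemma~\ref{RatioLowerBound} gives $d_k/d_{k+1}\ge 2/(n-k-1)\ge 2/(n-2)$ for all $1\le k<D$, so the Enestr\"om-Kakeya Theorem applied to $\widehat{W}(G;x)$ yields $|z|\ge 2/(n-2)$ for every nonzero Wiener root $z$ of $G$. Achievability is immediate from $W(K_{1,n-1};x)=(n-1)x+\binom{n-1}{2}x^2$, whose nonzero root is $-2/(n-2)$.

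For uniqueness, suppose $G$ has a Wiener root $z$ with $|z|=2/(n-2)$. Enestr\"om-Kakeya forces $\min_{1\le k<D}d_k/d_{k+1}\le 2/(n-2)$, so combined with Lemma~\ref{RatioLowerBound} the minimum is exactly $2/(n-2)$. Since $2/(n-k-1)>2/(n-2)$ for $k\ge 2$, the minimum must be attained at $k=1$, giving $(n-2)d_1=2d_2$. Tracing back through the proof of Lemma~\ref{RatioLowerBound} at $k=1$, this equation forces both $(n-2)d_1=|\mathcal{S}_1|$ and $|\mathcal{S}_1|=2d_2$. The first is the structural constraint I will exploit: for every edge $\{u,v\}$ and every vertex $w\in V(G)\setminus\{u,v\}$, the triple $(w,\{u,v\})$ must lie in $\mathcal{S}_1$, meaning $w$ is adjacent to exactly one of $u$ and $v$. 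In particular, $G$ is triangle-free and $N[u]\cup N[v]=V(G)$ for every edge $\{u,v\}$.

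The main obstacle is to deduce that this ``every edge partitions $V(G)$'' condition forces $G\cong K_{1,n-1}$. I plan to fix an edge $\{u,v\}$ and set $A=N(v)$ and $B=N(u)$; triangle-freeness yields $A\cap B=\emptyset$, and the ``exactly one'' rule yields $A\cup B=V(G)$. Applying the rule to a hypothetical edge inside $A$ would force $v$ to be adjacent to both endpoints, a contradiction, so $A$ is independent; likewise $B$. Applying the rule to edges of the form $\{v,a\}$ with $a\in A\setminus\{u\}$ shows that every $a\in A$ is adjacent to every $b\in B$. Hence $G\cong K_{m,n-m}$ for some $1\le m\le n-1$, and a direct calculation with $d_1=m(n-m)$ and $d_2=\binom{m}{2}+\binom{n-m}{2}$ reduces $d_1/d_2=2/(n-2)$ to $m(n-m)=n-1$, whose only solutions in $\{1,\dots,n-1\}$ are $m=1$ and $m=n-1$, both corresponding to $G\cong K_{1,n-1}$.
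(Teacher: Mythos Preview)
Your argument is correct. The lower bound and achievability steps match the paper exactly; the difference lies in the uniqueness argument.

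The paper handles uniqueness by a short case analysis showing that $r=\min_k d_k/d_{k+1}>2/(n-2)$ whenever $G\not\cong K_{1,n-1}$: for $k\ge 2$ one already has $d_k/d_{k+1}\ge 2/(n-3)$ from Lemma~\ref{RatioLowerBound}, and for $k=1$ the paper splits according to whether $G$ is a tree (if not, $d_1\ge n$ and $d_2\le \binom{n}{2}-n$; if so and $G\ne K_{1,n-1}$, then $D\ge 3$ forces $d_2<\binom{n-1}{2}$). Your route is genuinely different: you push the equality $(n-2)d_1=2d_2$ back through the double counting in the proof of Lemma~\ref{RatioLowerBound}, deduce that for every edge $\{u,v\}$ each third vertex is adjacent to exactly one of $u,v$, and from this structural condition classify $G$ as a complete bipartite graph $K_{m,n-m}$; the equation $m(n-m)=n-1$ then isolates $m\in\{1,n-1\}$. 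The paper's approach is shorter and avoids re-opening the lemma, while yours is more conceptual in that it actually characterizes the equality case of the counting bound (showing it holds precisely for complete bipartite graphs) before specializing. Both are valid; yours would be the natural choice if one also wanted to record that intermediate characterization.
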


\begin{proof}
Since $W(K_{1,n-1};x)=(n-1)x+\binom{n-1}{2}x^2,$ the star $K_{1,n-1}$ has a single nonzero Wiener root $-\tfrac{2}{n-2}.$

Now let $G$ be a connected graph of order $n$ and size $m$.  Let
\[
r=\min\left\{\tfrac{d_k}{d_{k+1}}\colon\ 1\leq k<D\right\}.
\]
From Lemma~\ref{RatioLowerBound}, $\tfrac{d_k}{d_{k+1}} \geq \tfrac{2}{n-k-1}\geq \tfrac{2}{n-2}$ for all $1\leq k<D.$  Thus, by the Enestr\"om-Kakeya Theorem, if $z$ is a nonzero Wiener root of $G$, then
\[
|z|\geq r \geq \tfrac{2}{n-2}.
\]
To see that the star is the unique connected graph of order $n$ with a Wiener root of modulus $\tfrac{2}{n-2},$ it suffices to show that if $G\not\cong K_{1,n-1},$ then $r>\tfrac{2}{n-2}$.  By Lemma \ref{RatioLowerBound},
\[
\tfrac{d_k}{d_{k+1}}\geq \tfrac{2}{n-k-1}\geq \tfrac{2}{n-3} \ \ \ \ \mbox{ for all } 2\leq k<D.
\]
It remains only to show that $\tfrac{d_1}{d_2}>\tfrac{2}{n-2}$ if $G\not\cong K_{1,n-1}$.  If $G$ is not a tree, then $d_1\geq n$ and $d_2\leq \binom{n}{2}-n=\tfrac{n(n-3)}{2}$, so
\[
\tfrac{d_1}{d_2}\geq \frac{n}{\tfrac{n(n-3)}{2}}=\tfrac{2}{n-3}>\tfrac{2}{n-2}.
\]
If $G$ is a tree but $G\not\cong K_{1,n-1},$ then $G$ has diameter at least $3$.  So $d_1=n-1$ and $d_2<\binom{n}{2}-(n-1)=\binom{n-1}{2}.$  Hence,
\[
\tfrac{d_1}{d_2}>\frac{n-1}{\tfrac{(n-1)(n-2)}{2}}=\tfrac{2}{n-2}.
\]
This completes the proof.
\end{proof}

\section{Density of Real Wiener Roots}\label{DensitySection}

From the work of Jackson~\cite{Jackson} and Thomassen~\cite{Thom}, it is known that the closure of the collection of real chromatic roots of all graphs is $\{0,1\} \cup [32/27, \infty)$. In \cite{BHN} it is shown that the closure of the real roots of independence polynomials of graphs is $(-\infty, 0]$, and in~\cite{BC} it is proven that the closure of the collection of all real roots of all-terminal reliability polynomials is $\{0\} \cup [1,2]$.  Since the coefficients of the Wiener polynomial of any connected graph are all positive, any real Wiener root lies in $(-\infty, 0]$.  The next result addresses the density of the real Wiener roots in this interval.

\begin{theorem}\label{Dense}
The collection of all real Wiener roots is dense in $(-\infty, 0]$, even when restricted to graphs of diameter $2$.
\end{theorem}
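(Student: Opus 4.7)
The plan is to restrict entirely to connected graphs of diameter exactly $2$, since their Wiener polynomials have an especially simple shape. If $G$ has order $n$ and diameter $2$, then
\[
W(G;x) = d_1 x + d_2 x^2 = x\bigl(d_1 + d_2 x\bigr),
\]
where $d_1$ is the number of edges of $G$ and $d_2 = \binom{n}{2} - d_1$. Hence $G$ contributes exactly one nonzero real Wiener root, namely $-d_1/d_2$. So the task reduces to showing that the set
\[
S = \left\{ -\tfrac{d_1}{d_2} :\ \text{some connected graph of diameter $2$ realizes $(d_1,d_2)$} \right\}
\]
is dense in $(-\infty, 0]$.

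Next, I would characterize which pairs $(d_1,d_2)$ arise. Concretely, I would show that for every $n \geq 2$ and every integer $m$ with $n-1 \leq m \leq \binom{n}{2}-1$, there is a connected diameter-$2$ graph of order $n$ with exactly $m$ edges. The construction is to start with the star $K_{1,n-1}$ (diameter $2$, $n-1$ edges) and add edges between pairs of leaves one at a time; the center remains a universal vertex throughout, so the diameter stays at $2$ until we reach $K_n$. This produces a graph for every edge count in $[n-1, \binom{n}{2}-1]$, and therefore every ratio $d_1/d_2 = m/\bigl(\binom{n}{2}-m\bigr)$ in that range is realized.

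Finally, given any target $t \in (-\infty, 0]$, I want to produce a sequence of realized roots converging to $t$. The case $t=0$ is handled directly by the stars $K_{1,n-1}$, whose nonzero root is $-2/(n-2) \to 0$. For $t = -s$ with $s > 0$, I choose, for each sufficiently large $n$, the integer $d_1 = \bigl\lceil \tfrac{s}{s+1}\binom{n}{2} \bigr\rceil$ and set $d_2 = \binom{n}{2}-d_1$. A short estimate shows $d_1$ lies in $[n-1,\binom{n}{2}-1]$ for all large $n$, so by the previous step such a diameter-$2$ graph exists, and
\[
\left| \tfrac{d_1}{d_2} - s \right| = \frac{|d_1 - s\, d_2|}{d_2} \leq \frac{s+1}{d_2} \longrightarrow 0
\]
as $n \to \infty$. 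Thus $-d_1/d_2 \to t$, completing the density argument.

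There is no real obstacle here; the only thing to check carefully is the range constraint $n-1 \leq d_1 \leq \binom{n}{2}-1$ for the chosen $d_1$, which holds for all $n$ past a threshold depending on $s$. The essential content is simply that the ratio $d_1/d_2$ of positive-integer coefficients from diameter-$2$ graphs can approximate any positive real, and this is exactly why the Wiener root $-d_1/d_2$ can approximate any element of $(-\infty,0]$.
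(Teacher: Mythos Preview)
Your proposal is correct and follows essentially the same approach as the paper: both restrict to diameter-$2$ graphs built by adding edges to the star $K_{1,n-1}$, observe that the unique nonzero Wiener root is $-d_1/d_2$, and then show this ratio can approximate any nonpositive real. The only minor difference is that the paper chooses $n=2(a+b)$ and $m=a(2(a+b)-1)$ to realize every negative rational $-a/b$ \emph{exactly} as a Wiener root, whereas you approximate an arbitrary real $-s$ by rounding $d_1$; both arguments are equally valid for density, though the paper's yields the slightly stronger conclusion that all negative rationals are actual Wiener roots.
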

\begin{proof}
Let $m,n$ be positive integers where $n\geq 3$ and $n-1 \le m < \binom{n}{2}$. Then there is a graph of order $n$, size $m$ and of diameter $2$. Such a graph can be obtained by starting with the star $K_{1,n-1}$ and adding edges between any $m-n+1$ non-adjacent pairs of vertices. The Wiener polynomial for any graph $H$ constructed in this way is given by
\[
W(H;x) =mx +\left(\tbinom{n}{2}-m\right) x^2.
\]
This Wiener polynomial has root $\frac{-m}{\binom{n}{2} -m}$.  We now show that the collection of all such roots is dense in $(-\infty, 0].$

Let $a,b \in {\mathbb{N}}$. We construct a graph of diameter 2 with Wiener root $-\frac{a}{b}$.  Since the negative rationals are dense in $(-\infty,0],$ the result follows immediately.  Let $n=2(a+b)$ and $m =a(2(a+b) -1)$.  Note that $n\geq 3$ and $n-1\leq m<\binom{n}{2}$.  By the above observation, there is a graph of diameter $2$ on $n$ vertices and $m$ edges, which has Wiener root
\[
\frac{-m}{\binom{n}{2} -m}=\frac{a(2(a+b)-1)}{(a+b)(2(a+b)-1)-a(2(a+b)-1)}=-\frac{a}{b}.\qedhere
\]
\end{proof}

\bigbreak

Next we consider a similar problem for trees: is the collection of real Wiener roots of trees dense in $(-\infty,0]$?  While this problem appears more difficult, we prove that the closure of the collection of real Wiener roots of trees contains $(-\infty,-1]$.  We will use the family of double stars in our proof, so we first prove that the Wiener roots of all double stars of sufficiently large order are all real.  Recall that for $n\geq 4$ and $2\leq k\leq \lfloor n/2\rfloor$, $D_{k,n-k}$ denotes the double star of order $n$ with two internal vertices adjacent to $k-1$ and $n-k-1$ leaves, respectively (see Figure~\ref{doublestar}).

\begin{lemma}\label{DoubleStarReal}
Let $n\geq 15$ and $2\leq k\leq \lfloor n/2\rfloor$.  Then the Wiener roots of the double star $D_{k,n-k}$ are all real.
\end{lemma}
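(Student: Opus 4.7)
The plan is to recognize that the reduced Wiener polynomial of $D_{k,n-k}$, which from the computation in the proof of Corollary~\ref{UpperBound} reads
\[
\widehat{W}(D_{k,n-k};x) \;=\; (n-1) + \left[\tbinom{k}{2}+\tbinom{n-k}{2}\right]x + (k-1)(n-k-1)x^{2},
\]
is a \emph{quadratic} with strictly positive coefficients for $2\le k\le \lfloor n/2\rfloor$. Its roots are therefore real precisely when its discriminant is non-negative, and if they are real they are automatically both negative (so together with the root $0$ of $W(D_{k,n-k};x)$ we recover all Wiener roots). Thus the task reduces to proving
\[
\Delta(n,k) \;:=\; \left[\tbinom{k}{2}+\tbinom{n-k}{2}\right]^{2} - 4(n-1)(k-1)(n-k-1) \;\ge\; 0
\]
for all $n\ge 15$ and all admissible $k$.

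To deal with the two parameters simultaneously, I would substitute $u=k-1$ and $v=n-k-1$ and rewrite $\Delta(n,k)\ge 0$ as the equivalent inequality $(u^{2}+v^{2}+u+v)^{2}\ge 16\,uv\,(u+v+1)$. Then, with $s=u+v=n-2$ \emph{fixed} and $p=uv$ the running variable, the left side minus the right side becomes
\[
f(p) \;=\; (s^{2}+s-2p)^{2}-16(s+1)\,p,
\]
and a direct derivative computation yields $f'(p)=8p-4s^{2}-20s-16$. Since $p\le s^{2}/4<(s+1)(s+4)/2$ for all $s\ge 0$, we have $f'(p)<0$ throughout the feasible range, so $f$ is strictly decreasing in $p$. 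Hence the minimum value of $f$ over all admissible $k$ occurs at the \emph{largest} value of $p$, which by AM--GM is attained by the most balanced double star: $u=v=(n-2)/2$ when $n$ is even, and $(u,v)=\bigl((n-3)/2,\,(n-1)/2\bigr)$ when $n$ is odd.

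Finally, I would just substitute those two extremal configurations. Writing $n=2m$ in the even case, the inequality collapses (after factoring out $4(m-1)^{2}$) to $m^{2}-8m+4\ge 0$, which holds for $m\ge 8$, that is for $n\ge 16$. Writing $n=2m+1$ in the odd case, it collapses (after factoring out $4m^{2}$) to $m^{2}-8m+8\ge 0$, which holds for $m\ge 7$, that is for $n\ge 15$. Together these two calculations cover all $n\ge 15$ and incidentally show that the hypothesis $n\ge 15$ is sharp (the balanced double stars at $n=14$ and $n=13$ fail). The principal obstacle is the monotonicity reduction: once one is satisfied that the extremal $D_{k,n-k}$ is the balanced one, what remains is only the pair of elementary quadratic inequalities above, and it is exactly the need to treat the two parities separately that produces the sharp threshold $n=15$.
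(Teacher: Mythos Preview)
Your proof is correct and follows the same core idea as the paper: both arguments reduce to showing that the discriminant
\[
\left(\tbinom{k}{2}+\tbinom{n-k}{2}\right)^{2}-4(n-1)(k-1)(n-k-1)
\]
is non-negative, and both identify the balanced double star as the extremal case. The only difference is in how that reduction is carried out. The paper treats $k$ as a real variable and applies elementary calculus to each term separately---the square term is minimized and the product term is maximized simultaneously at $k=n/2$---yielding the single bound $\tfrac{1}{16}(n^{2}-16n+16)(n-2)^{2}>0$, which holds for $n\ge 15$. You instead pass to the symmetric variables $s=u+v$, $p=uv$ and show the combined expression is monotone decreasing in $p$, then split into parities to evaluate at the integer extremum. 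Your route is a little longer but has the mild advantage of making the sharpness of the threshold $n\ge 15$ explicit for both parities; the paper's real-variable bound also gives the sharp $n\ge 15$ but does not separate the parities.
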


\begin{proof}
The Wiener polynomial of $D_{k,n-k}$ is given by
\[
W(D_{k,n-k};x)=(n-1)x + \left(\tbinom{k}{2}+ \tbinom{n-k}{2}\right)x^2 + (k-1)(n-k-1) x^3.
\]
The discriminant of the reduced Wiener polynomial $\widehat{W}(D_{k,n-k};x)$ is given by 
\[\left(\tbinom{k}{2}+ \tbinom{n-k}{2}\right)^2-4(n-1)(k-1)(n-k-1).\] By applying elementary Calculus to each of the two terms in this expression we see that
\begin{align*}
&\left(\tbinom{k}{2}+ \tbinom{n-k}{2}\right)^2-4(n-1)(k-1)(n-k-1)  \\
&\geq  \left( \left(\tfrac{n}{2} \right)\left(\tfrac{n}{2} -1 \right) \right)^2 - 4(n-1)
 \left(\tfrac{n}{2}-1\right)\left(n-\tfrac{n}{2}-1\right)\\
& =  \tfrac{1}{16}(n^2-16n+16)(n-2)^2\\
& >  0.
\end{align*}
Thus we conclude that the Wiener roots of any double star of order $n\geq 15$ are all real.
\end{proof}

Using a computer algebra system it was verified that there are double stars of each order $4\leq n\leq 14$ with nonreal Wiener roots; so we cannot avoid the assumption that $n\geq 15$ in the previous lemma.  We are now ready to prove our result concerning the closure of the collection of real Wiener roots of trees.

\begin{theorem}\label{TreeDense}
The closure of the collection of real Wiener roots of trees contains the interval $(-\infty, -1]$.
\end{theorem}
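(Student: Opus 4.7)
The plan is to exhibit, for every $r \in (-\infty, -1]$, a sequence of double stars whose (real) Wiener roots converge to $r$. For $n \geq 15$, Lemma~\ref{DoubleStarReal} guarantees that both nonzero Wiener roots of $D_{k,n-k}$ are real, and these are the roots of the quadratic
\[
\widehat{W}(D_{k,n-k};x) = (k-1)(n-k-1)\, x^2 + \left(\tbinom{k}{2} + \tbinom{n-k}{2}\right) x + (n-1).
\]
By Vieta's formulas, these roots have sum $S_{n,k} = -\frac{\binom{k}{2} + \binom{n-k}{2}}{(k-1)(n-k-1)}$ and product $P_{n,k} = \frac{n-1}{(k-1)(n-k-1)}$.

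I would parameterize the construction by $\alpha \in (0, 1/2]$: for each sufficiently large $n$, take $k_n = \max\{2, \lfloor \alpha n \rfloor\}$, so that $k_n/n \to \alpha$ and $2 \leq k_n \leq \lfloor n/2 \rfloor$. A routine asymptotic calculation then yields $P_{n,k_n} = \Theta(1/n) \to 0$ and
\[
S_{n,k_n} \longrightarrow -\frac{\alpha^2 + (1-\alpha)^2}{2\alpha(1-\alpha)} = 1 - \frac{1}{2\alpha(1-\alpha)} =: h(\alpha).
\]
The function $h$ is continuous and strictly increasing on $(0, 1/2]$, with $h(1/2) = -1$ and $h(\alpha) \to -\infty$ as $\alpha \to 0^+$, so it maps $(0, 1/2]$ onto $(-\infty, -1]$. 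Given any target $r \leq -1$, I would pick the corresponding $\alpha \in (0, 1/2]$, which exists since the equation $\alpha(1-\alpha) = \frac{1}{2(1-r)}$ always has a solution in $(0, 1/2]$ when the right-hand side lies in $(0, 1/4]$.

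With $\alpha$ fixed, I apply the quadratic formula to the monic form $x^2 - S_n x + P_n = 0$ (writing $S_n := S_{n,k_n}$, $P_n := P_{n,k_n}$). Since $S_n \to r < 0$ and $P_n \to 0$, the discriminant satisfies $S_n^2 - 4P_n \to r^2 > 0$, and hence $\sqrt{S_n^2 - 4P_n} \to |r| = -r$. Therefore the two real roots of $\widehat{W}(D_{k_n, n-k_n};x)$ converge to $\frac{r + (-r)}{2} = 0$ and $\frac{r - (-r)}{2} = r$; in particular, $r$ lies in the closure of the real Wiener roots of trees, and since $r \in (-\infty, -1]$ was arbitrary, the theorem follows. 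The main work is the asymptotic estimates, which are routine; the essential nontrivial ingredient is Lemma~\ref{DoubleStarReal}, which provides real-rootedness uniformly along the sequence and lets us conclude about \emph{real} Wiener roots rather than merely complex ones.
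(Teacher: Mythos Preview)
Your proposal is correct and follows essentially the same approach as the paper: both arguments use the double stars $D_{k,n-k}$ with $k/n$ tending to a fixed ratio, invoke Lemma~\ref{DoubleStarReal} for real-rootedness, and show that the more negative root converges to a value that sweeps out $(-\infty,-1]$ as the ratio varies. Your parameterization by $\alpha=k/n$ (with $k_n=\lfloor\alpha n\rfloor$) and use of Vieta's formulas is a slightly cleaner packaging than the paper's, which parameterizes by a rational $r$ with $k=n/(2r+1)$, obtains the limiting root $-r-\tfrac{1}{4r}$, and then appeals to the density of the rationals; the two limits agree under the substitution $\alpha=1/(2r+1)$.
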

\begin{proof}
Let $r =\frac{a}{b}$ where $a, b \in \mathbb{N}$.  Let $n = (2a+b)\ell$ for some $\ell \in \mathbb{N}$ with $\ell\geq 5$ (so that $n\geq 15$), and define $k=\frac{n}{2r+1}.$  Then
\[
k=\frac{n}{2r+1}=\frac{(2a+b)\ell}{2\tfrac{a}{b}+1}=\ell b.
\]
So $k$ is an integer and certainly $2\leq k\leq n-2$.  The Wiener polynomial of the double star $D_{k,n-k}$ is given by
\[
W(D_{k,n-k};x)=(n-1)x + \left(\tbinom{k}{2}+ \tbinom{n-k}{2}\right)x^2 + (k-1)(n-k-1) x^3.
\]
From Lemma~\ref{DoubleStarReal}, all of the roots of $D_{k,n-k}$ are real. Substituting $k=\frac{n}{2r+1}$ into this polynomial, and solving for the nonzero roots using the quadratic formula, and then taking the limit of the left-most root as $\ell \rightarrow \infty$, we see that this root approaches $-r-\frac{1}{4r}$.

Therefore, the set of limit points of real Wiener roots of double stars contains the set $\left\{-r-\tfrac{1}{4r}|r \in \mathbb{Q}^+\right\}$.  Using elementary calculus it can be shown that the real-valued function $f(x) = -x -\frac{1}{4x}$, for $x > 0$,  attains its maximum value of $-1$ at $r = \frac{1}{2}$.  We see immediately that there is a double star with Wiener root as close as we wish to $-1$.  Moreover, $\lim_{x \rightarrow \infty}f(x) = -\infty$.  Thus for every $y \in (-\infty, -1)$  there is  an $x \in (1/2, \infty)$ such that $f(x)=y$. Since $f$ is continuous on $(1/2, \infty)$ there is for every $\epsilon > 0$ a $\delta >0$ such that if $\overline{x} \in (x- \delta, x+\delta)$, then $|y-f(\overline{x})| < \epsilon$. Since the rational numbers are dense in the real numbers, there is a rational $r \in (x-\delta, x+\delta)$.  Thus $|y-f(r)| < \epsilon$.  Hence there is a double star with a root as close as we wish to a given real number in $(-\infty, -1]$.
\end{proof}

\section{Imaginary and Real Parts of Wiener Roots}\label{RealAndImagSection}

We now consider the collection of all \emph{complex} Wiener roots of graphs, and focus on the imaginary and real parts. We have already seen that there are real Wiener roots of arbitrarily large modulus, but we have only seen examples of arbitrarily large negative real  Wiener roots.  In this section we show that there are Wiener roots with arbitrarily large positive real part and that there are Wiener roots with arbitrarily large imaginary part (even if we restrict to trees).
The family of ``brooms'' is useful for proving these results. For integers $n > k \ge 3$ the {\em broom} $B_{k,n-k}$ of order $n$ having a {\em handle} of order $k$ is obtained from a path of order $k$ by joining $n-k$ leaves to exactly one of its end-vertices.

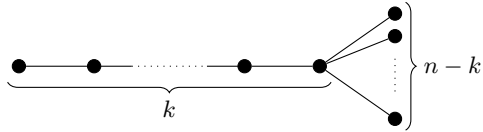
\begin{figure}[ht]
\centering
\begin{tikzpicture}
\vertex (1) at (0,0) {};
\vertex (2) at (1,0) {};
\vertex (3) at (3,0) {};
\vertex (4) at (4,0) {};
\vertex (5) at (5,0.7) {};
\vertex (6) at (5,0.4) {};
\vertex (7) at (5,-0.7) {};
\path
(1) edge (2)
(3) edge (4)
(4) edge (5)
(4) edge (6)
(4) edge (7);
\draw (1,0) -- (1.5,0);
\draw[dotted] (1.5,0) -- (2.5,0);
\draw (2.5,0) -- (3,0);
\draw[dotted] (5,0.1) -- (5,-0.4);
\draw [decorate,decoration={brace,amplitude=4pt},xshift=4pt,yshift=0pt]
(5,0.85) -- (5,-0.85) node [right,black,midway,xshift=3pt]
{\footnotesize $n-k$};
\draw [decorate,decoration={brace,amplitude=4pt,mirror},xshift=0pt,yshift=-6pt]
(-0.15,0) -- (4.15,0) node [below,black,midway,yshift=-3pt]
{\footnotesize $k$};
\end{tikzpicture}
\caption{The broom $B_{k,n-k}$.}
\label{broom}
\end{figure}

\begin{proposition}\label{LargeImaginaryPart}
There are connected graphs (even trees) with Wiener roots having arbitrarily large imaginary part.
\end{proposition}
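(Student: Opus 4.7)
The plan is to use the family of brooms $B_{4,n-4}$ (a handle of four vertices with $n-4$ leaves attached to one end) and to show that the reduced Wiener polynomial of $B_{4,n-4}$ has a pair of complex-conjugate roots whose imaginary parts tend to infinity with $n$.

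Writing $m=n-4$, I would first count pairs of vertices of $B_{4,m}$ at each distance to obtain
\[
W(B_{4,m};x)=(m+3)x+\left(\tbinom{m}{2}+m+2\right)x^{2}+(m+1)x^{3}+mx^{4}.
\]
So the reduced Wiener polynomial $p_{m}(x)=\widehat{W}(B_{4,m};x)$ is a cubic whose coefficient on $x$ grows like $m^{2}/2$ while the remaining coefficients grow only linearly in $m$; in particular all real roots are negative.

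Next I would pin down the root structure of $p_{m}$. A one-line computation shows that the discriminant of the quadratic $p_{m}'(x)$ is negative for every $m\ge 1$, so $p_{m}$ is strictly increasing on $\mathbb{R}$ and therefore has a unique real root $r_{m}$ together with a conjugate pair $z_{m},\overline{z_{m}}$ of non-real roots. Evaluating $p_{m}$ at $-3/m$ and $-1/m$ (each value simplifies to a quantity of order $m$ with the correct sign for large $m$), the Intermediate Value Theorem then gives $r_{m}\in(-3/m,-1/m)$ for all sufficiently large $m$, and in particular $r_{m}\to 0^{-}$.

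Finally, I would apply Vieta's formulas to $p_{m}$. From
\[
r_{m}\,|z_{m}|^{2}=-\frac{m+3}{m}\qquad\text{and}\qquad r_{m}+2\,\Re(z_{m})=-\frac{m+1}{m},
\]
the estimate $r_{m}=\Theta(1/m)$ forces $|z_{m}|^{2}=\Theta(m)\to\infty$, while $\Re(z_{m})\to-\tfrac{1}{2}$. Hence $\Im(z_{m})^{2}=|z_{m}|^{2}-\Re(z_{m})^{2}\to\infty$, producing trees with Wiener roots of arbitrarily large imaginary part. The main obstacle is the middle step: ruling out three real roots and pinning down $r_{m}$ sharply enough that the product-of-roots relation forces $|z_{m}|\to\infty$. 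Once the derivative-discriminant check and the IVT estimate are in place, Vieta does the rest with no further work.
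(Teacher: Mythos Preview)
Your proposal is correct and uses the same family of trees as the paper, the brooms $B_{4,n-4}$, with the same Wiener polynomial (your expression with $m=n-4$ agrees with the paper's). The difference lies in how the complex roots are analysed. The paper simply hands $\widehat{W}(B_{4,n-4};x)$ to a computer algebra system, extracts an explicit formula for the conjugate pair $a_n\pm b_n i$, and reads off $\lim_{n\to\infty} b_n/\sqrt{n}=2^{-1/2}$. You instead argue entirely by hand: the negative discriminant of $p_m'$ forces monotonicity and hence a unique real root $r_m$; the IVT bracket $(-3/m,-1/m)$ pins $r_m=\Theta(1/m)$; and then Vieta gives $|z_m|^2=\Theta(m)$ and $\Re(z_m)\to -\tfrac12$, so $\Im(z_m)^2\to\infty$. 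Your route is more elementary and fully self-contained, at the cost of losing the sharp constant $2^{-1/2}$. The paper also includes, before the tree argument, a separate non-tree family (a near-complete graph with a pendant vertex) giving imaginary parts of order $n$ rather than $\sqrt{n}$; you skip this, which is fine since the tree case already establishes the proposition.
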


\begin{proof}
For $n\geq 4$, let $G_n$ denote the graph obtained from $K_{n-1}-e$ by attaching a leaf to one of the vertices of degree $n-3$.  Then
\[
W(G_n;x)=\binom{n-1}{2}x+(n-2)x^2+x^3,
\]
which has nonzero roots
\[
-\tfrac{n}{2}+1\pm\tfrac{1}{2}\sqrt{2n-n^2},
\]
whose imaginary parts tend to $\pm\tfrac{n}{2}$ asymptotically.

It remains to show that there are trees with Wiener roots having arbitrarily large imaginary part.  We consider a collection of brooms to achieve this result.  Using straightforward counting techniques, we find
\[
W(B_{4,n-4};x)=(n-1)x+\left(\tbinom{n-3}{2}+2\right)x^2+(n-3)x^3+(n-4)x^4.
\]
Using a computer algebra system, we obtain explicit expressions for the roots of the reduced Wiener polynomial $\widehat{W}(B_{4,n-4};x)$ in terms of $n$.  These include a pair of nonreal roots $a_n\pm b_ni$ for which
\[
\lim_{n\rightarrow \infty}\tfrac{b_n}{\sqrt{n}}=2^{-1/2}.\qedhere
\]
\end{proof}

It follows that the largest imaginary part of a Wiener root of a connected graph of order $n$ is $\Omega(n),$ while the largest imaginary part of a Wiener root of a tree of order $n$ is only known to be $\Omega(\sqrt{n})$.

We now consider the positive real part of Wiener roots.  We first summarize what we know for connected graphs and trees of small order; this information has been verified using a computer algebra system.  For $n\leq 5$, we found that there are no graphs of order $n$ with Wiener roots having positive real part.  For $6\leq n\leq 9,$ the connected graph with Wiener root of largest positive real part is the path $P_n,$ and in fact, for $6\leq n\leq 15$, the tree with Wiener root of largest positive real part is $P_n$.  We have already mentioned that the Wiener roots of $P_n$ all have modulus at most $2$ (and hence real part at most $2$).  However, we found that the trees of order $n$ with the Wiener root of largest positive real part are not paths for $n=16$ and $n=17$ (see Figure~\ref{Tree16} and Figure~\ref{Tree17}, respectively).  We show below that the largest positive real part of a Wiener root of a connected graph (or tree) of order $n$ has growth rate $\Omega(\sqrt[3]{n}).$  As for Proposition~\ref{LargeImaginaryPart}, a particular family of brooms is used.

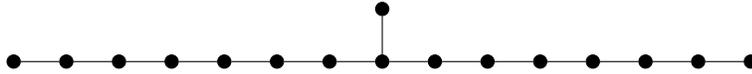
\begin{figure}[htb]
\centering
\begin{tikzpicture}[scale=0.7]
\vertex (1) at (0,0) {};
\vertex (2) at (1,0) {};
\vertex (3) at (2,0) {};
\vertex (4) at (3,0) {};
\vertex (5) at (4,0) {};
\vertex (6) at (5,0) {};
\vertex (7) at (6,0) {};
\vertex (8) at (7,0) {};
\vertex (9) at (8,0) {};
\vertex (10) at (9,0) {};
\vertex (11) at (10,0) {};
\vertex (12) at (11,0) {};
\vertex (13) at (12,0) {};
\vertex (14) at (13,0) {};
\vertex (15) at (14,0) {};
\vertex (16) at (7,1) {};
\path
(1) edge (2)
(2) edge (3)
(3) edge (4)
(4) edge (5)
(5) edge (6)
(6) edge (7)
(7) edge (8)
(8) edge (9)
(9) edge (10)
(10) edge (11)
(11) edge (12)
(12) edge (13)
(13) edge (14)
(14) edge (15)
(16) edge (8);
\end{tikzpicture}
\caption{The tree of order $16$ with a Wiener root of largest positive real part.}
\label{Tree16}
\end{figure}

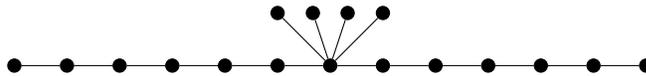
\begin{figure}[htb]
\centering
\begin{tikzpicture}[scale=0.7]
\vertex (1) at (0,0) {};
\vertex (2) at (1,0) {};
\vertex (3) at (2,0) {};
\vertex (4) at (3,0) {};
\vertex (5) at (4,0) {};
\vertex (6) at (5,0) {};
\vertex (7) at (6,0) {};
\vertex (8) at (7,0) {};
\vertex (9) at (8,0) {};
\vertex (10) at (9,0) {};
\vertex (11) at (10,0) {};
\vertex (12) at (11,0) {};
\vertex (13) at (12,0) {};
\vertex (14) at (5,1) {};
\vertex (15) at (5.67,1) {};
\vertex (16) at (6.33,1) {};
\vertex (17) at (7,1) {};
\path
(1) edge (2)
(2) edge (3)
(3) edge (4)
(4) edge (5)
(5) edge (6)
(6) edge (7)
(7) edge (8)
(8) edge (9)
(9) edge (10)
(10) edge (11)
(11) edge (12)
(12) edge (13)
(14) edge (7)
(15) edge (7)
(16) edge (7)
(17) edge (7);
\end{tikzpicture}
\caption{The tree of order $17$ with a Wiener root of largest positive real part.}
\label{Tree17}
\end{figure}

\begin{proposition}\label{LargeReal}
There are connected graphs (even trees) with Wiener roots having arbitrarily large positive real part.
\end{proposition}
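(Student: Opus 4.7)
The plan is to exhibit a family of trees, namely the brooms $B_{5,n-5}$ for $n\ge 6$, whose Wiener polynomials have complex roots with real part of order $n^{1/3}$. A direct enumeration of pairs at each distance in $B_{5,n-5}$ (whose handle is the path $v_1v_2v_3v_4v_5$ with $n-5$ leaves attached at $v_5$) gives
\[
\widehat{W}(B_{5,n-5};x) \;=\; (n-1) \;+\; \Bigl(\tbinom{n-5}{2}+n-2\Bigr)x \;+\; (n-3)x^{2} \;+\; (n-4)x^{3} \;+\; (n-5)x^{4}.
\]
The crucial feature is that the coefficient of $x$ is $\Theta(n^{2})$, whereas every other coefficient is $\Theta(n)$. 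This imbalance forces the large-modulus roots to grow like $n^{1/3}$ and to lie asymptotically at cube roots of a negative real, two of which have positive real part.

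To execute this I would use a rescaling argument. Substituting $x=n^{1/3}y$ and dividing by $n^{7/3}$ gives
\[
\widetilde{f}_{n}(y) \;:=\; \frac{\widehat{W}(B_{5,n-5};n^{1/3}y)}{n^{7/3}} \;=\; \tfrac{n-5}{n}\,y^{4} + \tfrac{n-4}{n^{4/3}}\,y^{3} + \tfrac{n-3}{n^{5/3}}\,y^{2} + \tfrac{\binom{n-5}{2}+n-2}{n^{2}}\,y + \tfrac{n-1}{n^{7/3}}.
\]
As $n\to\infty$, $\widetilde{f}_{n}(y)\to y^{4}+\tfrac{1}{2}y = y\bigl(y^{3}+\tfrac{1}{2}\bigr)$ uniformly on compact subsets of $\mathbb{C}$, and the leading coefficient stays bounded away from $0$.

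The next step is to pass to roots. By the continuity of roots of polynomials with fixed degree and nonvanishing leading coefficient (e.g.\ an application of Hurwitz's theorem), the four roots of $\widetilde{f}_{n}(y)$ converge to the four roots of $y(y^{3}+\tfrac{1}{2})$, namely $0$ and the three cube roots of $-1/2$: the real root $-2^{-1/3}$ and the complex conjugate pair $2^{-1/3}e^{\pm i\pi/3}$. In particular $\widetilde{f}_{n}$ has two roots $y_{n}^{\pm}$ with $y_{n}^{\pm}\to 2^{-1/3}e^{\pm i\pi/3}$, so the corresponding Wiener roots $z_{n}^{\pm}=n^{1/3}y_{n}^{\pm}$ of $B_{5,n-5}$ satisfy $\mathrm{Re}(z_{n}^{\pm}) = \bigl(2^{-4/3}+o(1)\bigr)n^{1/3}$, which grows without bound and establishes the proposition (and in fact gives the quantitative rate $\Omega(\sqrt[3]{n})$ foreshadowed by the authors).

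The step that will require the most care is the root-continuity conclusion: one must verify that each of the three nonzero simple roots of the limit polynomial is approached by exactly one root of $\widetilde{f}_{n}$, rather than being a spurious accumulation point. This is standard, and can be handled either by a direct invocation of Hurwitz's theorem applied on small disks around the three simple limit roots, or by a small explicit Rouch\'{e}-theorem estimate on those same disks, using that the limit polynomial has a nonzero derivative at each of its simple roots. No other part of the argument presents a conceptual difficulty beyond the routine counting of distances in the broom.
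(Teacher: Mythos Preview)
Your proposal is correct. You use the same family as the paper (the brooms $B_{5,n-5}$) and arrive at the same asymptotic constant $2^{-4/3}$ for $a_n/n^{1/3}$, so in that sense the approach is identical; your formula $\tbinom{n-5}{2}+n-2$ for $d_2$ also agrees with the paper's $\tbinom{n-4}{2}+3$.

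Where you differ is in how the root asymptotics are extracted. The paper simply appeals to a computer algebra system to produce explicit radical expressions for the roots of the degree-four reduced Wiener polynomial and then reads off the limit of $a_n/n^{1/3}$. You instead rescale $x=n^{1/3}y$, normalise, and pass to the limiting polynomial $y(y^3+\tfrac12)$, invoking Hurwitz/Rouch\'e to track the two simple roots at $2^{-1/3}e^{\pm i\pi/3}$. This is a genuinely cleaner and more transparent argument: it explains structurally why the constant $2^{-4/3}$ appears (as $\tfrac12\cdot 2^{-1/3}$ from a cube root of $-\tfrac12$), it avoids any reliance on symbolic computation, and it would generalise painlessly to longer-handled brooms $B_{k,n-k}$ where the quartic formula is unavailable. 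The paper's CAS route, by contrast, is shorter to state but opaque. Your caveat about verifying that the simple limit roots are each tracked by exactly one root of $\widetilde f_n$ is exactly the right place to be careful, and Hurwitz on small disks handles it as you say.
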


\begin{proof}
Using straightforward counting techniques, we find
\[
W(B_{5,n-5};x)=(n-1)x+\left(\tbinom{n-4}{2}+3\right)x^2+(n-3)x^3+(n-4)x^4+(n-5)x^5.
\]
Using a computer algebra system, we obtain an explicit expression for the roots of the reduced Wiener polynomial $\widehat{W}(B_{5,n-5};x)$ in terms of $n$.  These include a pair of nonreal roots $a_n\pm b_ni$ for which
\[
\lim_{n\rightarrow \infty}\tfrac{a_n}{\sqrt[3]{n}}=2^{-4/3}. \qedhere
\]
\end{proof}

Taken together, Theorem~\ref{TreeDense}, Proposition~\ref{LargeImaginaryPart}, and Proposition~\ref{LargeReal} imply that the collection of Wiener roots of all connected graphs (or trees) is not contained in any \emph{half-plane}; a region in the complex plane consisting of all points on one side of a line (and no points on the other side).  This is at least weak evidence that the collection of all complex Wiener roots may be dense in the entire complex plane, even if we only consider trees.

\begin{theorem}
The collection of complex Wiener roots of all graphs (or trees) is not contained in any half-plane.
\end{theorem}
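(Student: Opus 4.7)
The plan is to combine the three preceding results by a geometric argument on the unit circle of directions. Write every half-plane in the form $H_{\alpha,c} = \{z \in \mathbb{C} : \Re(\alpha z) \leq c\}$ (or with strict inequality) for some $\alpha \in \mathbb{C}\setminus\{0\}$ and $c \in \mathbb{R}$. The key observation is that if $(z_n)$ is any sequence of points in $H_{\alpha,c}$ with $|z_n| \to \infty$ and $z_n/|z_n| \to e^{i\theta}$, then dividing the inequality $\Re(\alpha z_n) \leq c$ by $|z_n|$ and letting $n \to \infty$ forces $\Re(\alpha e^{i\theta}) \leq 0$. Hence every limiting direction of Wiener roots contained in $H_{\alpha,c}$ must lie in a single closed arc of length $\pi$ on the unit circle.

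To prove the theorem, it suffices to exhibit limiting directions of tree Wiener roots that collectively fail to fit in any such arc. First, Theorem~\ref{TreeDense} supplies the direction $e^{i\pi}$, via real roots of double stars tending to $-\infty$. Second, the brooms $B_{4,n-4}$ used in Proposition~\ref{LargeImaginaryPart} have two large roots asymptotic to $\pm i\sqrt{n/2}$, giving the directions $e^{\pm i\pi/2}$. Third, the brooms $B_{5,n-5}$ used in Proposition~\ref{LargeReal} have three large roots asymptotic to $(n/2)^{1/3}$ times the three cube roots of $-1$, two of which give the directions $e^{\pm i\pi/3}$. Arranged on the unit circle, the five points $e^{i\pi}, e^{\pm i\pi/2}, e^{\pm i\pi/3}$ have maximum consecutive angular gap $2\pi/3 < \pi$, so no closed arc of length $\pi$ contains all five, contradicting the assumption that every Wiener root of a tree lies in $H_{\alpha,c}$.

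The main technical obstacle is upgrading the one-dimensional growth claims in Propositions~\ref{LargeImaginaryPart} and~\ref{LargeReal} to the full two-dimensional direction statements used above. This is a dominant-balance calculation: the reduced Wiener polynomial $\widehat{W}(B_{k,n-k};x)$ for $k \in \{4,5\}$ has coefficients of orders $n, n^2, n, \ldots, n$, so under the substitution $x = (n/2)^{1/(k-2)} \zeta$ the equation reduces, after dividing through by the coefficient of $x$, asymptotically to $\zeta + \zeta^{k-1} = 0$; its nonzero solutions are precisely the $(k-2)$-th roots of $-1$, which yield both the modulus and the direction of the large-root families and confirm the escape directions claimed above. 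An equivalent but less streamlined alternative would be to combine the three unboundedness statements (negative reals, imaginary parts in both directions via conjugation, positive reals) to show directly that the defining linear functional must vanish; this, however, still requires the same control on the ratio of coordinates along each sequence and so reduces to the same asymptotic input.
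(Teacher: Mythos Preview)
Your proof is correct, but it takes a genuinely different and more elaborate route than the paper's. The paper argues by a direct case split on the boundary line $L$ of the putative half-plane $H$: if $L$ is horizontal, roots with arbitrarily large imaginary part (together with their conjugates) escape $H$; if $L$ meets the real axis at a point $p$, then either $(-\infty,p)$ lies outside $H$, contradicted by Theorem~\ref{TreeDense}, or $(p,\infty)$ lies outside $H$, in which case one of the two quarter-planes $\{\Re z>p,\ \Im z\ge 0\}$ or $\{\Re z>p,\ \Im z\le 0\}$ is entirely contained in $H^c$, and any Wiener root with real part exceeding $p$ (Proposition~\ref{LargeReal}) has either itself or its conjugate in that quarter-plane. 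The key point is that this argument uses \emph{only} the bare unboundedness statements in Propositions~\ref{LargeImaginaryPart} and~\ref{LargeReal}---no information about limiting directions $z_n/|z_n|$ is needed.

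Your approach, by contrast, upgrades those propositions to full asymptotic direction statements via a dominant-balance analysis of $\widehat{W}(B_{k,n-k};x)$, and then invokes the pigeonhole observation that the five directions $e^{i\pi},e^{\pm i\pi/2},e^{\pm i\pi/3}$ cannot fit in a closed semicircle. This is valid and has the merit of extracting sharper information about the large roots of the brooms (e.g., all three cube-root directions for $k=5$), but it is strictly more work than the paper requires. In particular, your final remark that the ``less streamlined alternative\dots still requires the same control on the ratio of coordinates'' is not accurate: the paper's quarter-plane argument shows exactly how to avoid that control.
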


\begin{proof}
Suppose otherwise that the Wiener roots of all trees are contained in some half-plane $H$.  If $H$ is described by a horizontal line, this contradicts Proposition~\ref{LargeImaginaryPart}, since complex roots of polynomials with real coefficients come in conjugate pairs.  Otherwise, $H$ is described by a line that intersects the real axis.  We have two cases.

\medbreak

\noindent
{\bf Case 1: }{\itshape Some interval $(-\infty,a)$ of the real line is not contained in $H$.}

\noindent
This contradicts Theorem~\ref{TreeDense}.

\medbreak

\noindent
{\bf Case 2: }{\itshape Some interval $(b,\infty)$ of the real line is not contained in $H$.}

\noindent
Then at least one of the quarter planes $\{z\in \mathbb{C}\colon\ \Re(z)>b \mbox{ and } \Im(z)\geq 0\}$ or $\{z\in\mathbb{C}\colon\ \Re(z)>b \mbox{ and } \Im(z)\leq 0\}$ is not contained in the half-plane.  Since complex roots of polynomials with real coefficients come in conjugate pairs, this contradicts Proposition~\ref{LargeReal}.
\end{proof}

\medbreak

We close this section with some interesting observations.  Firstly, we note that there are graphs with purely imaginary Wiener roots.  Figure~\ref{smallpurelyimaginary} contains the graph of smallest order that has a purely imaginary Wiener root, and Figure~\ref{treewieneri} shows the tree of smallest order with $i$ as a Wiener root.  This observation is interesting in light of the fact that although chromatic roots are known to be dense in the complex plane, it is widely suspected that there are no purely imaginary chromatic roots~\cite{Bohn}.  Moreover, it is not known whether there are independence or reliability polynomials with purely imaginary roots.

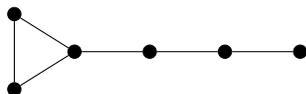
\begin{figure}[htb]
\centering
\begin{tikzpicture}
\vertex (1) at (0,0) {};
\vertex (2) at (1,0) {};
\vertex (3) at (2,0) {};
\vertex (4) at (3,0) {};
\vertex (5) at (-0.8,0.5) {};
\vertex (6) at (-0.8,-0.5) {};
\path
(1) edge (2)
(2) edge (3)
(3) edge (4)
(1) edge (6)
(1) edge (5)
(5) edge (6);
\end{tikzpicture}
\caption{Smallest graph with a purely imaginary Wiener root.}
\label{smallpurelyimaginary}
\end{figure}

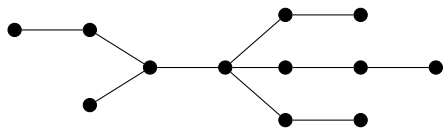
\begin{figure}[htb]
\centering
\begin{tikzpicture}
\vertex (1) at (0,0) {};
\vertex (2) at (1,0) {};
\vertex (3) at (1.8,0.7) {};
\vertex (4) at (1.8,0) {};
\vertex (5) at (-0.8,0.5) {};
\vertex (6) at (-0.8,-0.5) {};
\vertex (7) at (1.8,-0.7) {};
\vertex (8) at (-1.8,0.5) {};
\vertex (9) at (2.8,0.7) {};
\vertex (10) at (2.8,0) {};
\vertex (11) at (3.8,0) {};
\vertex (12) at (2.8,-0.7) {};
\path
(1) edge (2)
(2) edge (3)
(2) edge (4)
(1) edge (6)
(1) edge (5)
(2) edge (7)
(5) edge (8)
(3) edge (9)
(4) edge (10)
(10) edge (11)
(7) edge (12);
\end{tikzpicture}
\caption{Smallest tree with $i$ as a Wiener root.}
\label{treewieneri}
\end{figure}

Finally, we discuss graphs whose Wiener roots are all real.  The property of having all real roots is of interest combinatorially, since if the roots of a (real) polynomial with positive coefficients are all real, then the sequence of coefficients is log-concave, and hence unimodal -- see \cite{Stanley}, for example.  This condition has facilitated several proofs that certain graph theoretical sequences are log-concave.  For example, it was used to prove that the sequence of coefficients of the independence polynomial (the number of independent sets of each cardinality) of any clawfree graph is log-concave \cite{Chudnovsky}.

The Wiener roots of any graph of diameter $2$ are all real (as the corresponding reduced Wiener polynomial is linear).  Since it is well-known that almost every random graph $G \in {\mathcal G}(n,p)$ for fixed $p \in (0,1)$ has diameter $2$, we conclude that the Wiener roots of almost all graphs are real.  On the other hand, we have seen that there are infinitely many graphs with nonreal Wiener roots.  In contrast, for other graph polynomials, either almost all graphs have a nonreal root (as is the case for the chromatic~\cite{BW} and independence polynomial~\cite{BN2}) or the roots are always real (as is the case for the matching polynomial~\cite{HL}).

\section{Open Problems}

We conclude with a number of open problems. First, while we have maximized the modulus of Wiener roots of connected graphs of order $n$, and found the extremal graphs, our solution for trees is only asymptotic.

\begin{problem}
What is the tree of order $n$ with the Wiener root of largest modulus?
\end{problem}

\noindent Calculations suggest that the extremal graph is the tree $T_{n}$ shown in Figure~\ref{treeexamplemaxmod}.

\medbreak

While we have determined the closure of the real Wiener roots of graphs in general, we have not completed the discussion for trees. We propose the following.

\begin{problem}
Is the closure of real Wiener roots of trees the entire interval $(-\infty,0]$?
\end{problem}

\noindent
While we have found many Wiener roots in the interval $(-1,0)$, we have been unable to prove that the closure contains this interval.

On a related note, the closure of the collection of all complex Wiener roots is intriguing.  We have observed that the Wiener roots of almost all graphs are real, however, we also demonstrated that the collection of complex Wiener roots is not contained in any half-plane, even if we restrict ourselves to trees.

\begin{problem}
Is the closure of the collection of Wiener roots of connected graphs (or trees) the entire complex plane?
\end{problem}

\noindent
At this time, we do not know if the closure of the collection of complex Wiener roots contains a set of positive measure.

\medbreak

We have shown that the minimum real part of a Wiener root of a graph of order $n$ is precisely $-\left( \binom{n}{2} - 1 \right)$.  Currently the best result we have for the maximum positive real part, and the maximum (and minimum) imaginary parts are growth rates of $\Omega (\sqrt[3]{n})$ and $\Omega (n)$, respectively.

\begin{problem}
What connected graph of order $n$ has a Wiener root of (i) largest real part? and (ii) of largest imaginary part? What are the rates of growth (as a function of $n$) of these parameters?
\end{problem}

We have already seen that the roots of Wiener polynomials of almost all graphs are all real, since almost all graphs have diameter $2$, and any graph of diameter $2$ has all real Wiener roots.  We have also shown that the roots of the Wiener polynomials of all trees of order $n\geq 15$ and diameter $3$ are real.  In fact, there are trees of arbitrarily large diameter for which the Wiener roots are all real (and rational).  We describe such a family of trees recursively. Suppose $T_0$ is a tree with all real Wiener roots; for example, $P_3$ has this property. Let $T_1$ be the tree obtained from $T_0$ by adding a leaf to each of its vertices. Then
\[
W(T_1;x) = x^2W(T_0;x)+ 2xW(T_0;x) +W(T_0;x)=(x+1)^2W(T_0;x)
\]
and $D(T_1)=D(T_0)+2$. If $T_k$ has been defined for some $k \ge 1$, then let $T_{k+1}$ be obtained from $T_k$ by joining a leaf to each of its vertices. Then each $T_k$ has only real roots and $D(T_k) = D(T_0) + 2k$. If $T_0$ is chosen in such a way that it has only rational roots, then $T_k$ in fact has only rational roots.

\begin{problem}
What graphs have the property that their Wiener roots are all real? What trees have this property?
\end{problem}



\begin{thebibliography}{10}

\bibitem{ASS1993}
A.~T. Amin, K.~T. Siegrist, and P.~J. Slater, \emph{The expected number of
  pairs of connected nodes: pair-connected reliability}, Math. Comput. Model.
  \textbf{17}({11}) (1993), 1--11.

\bibitem{Bohn}
A.~Bohn, \emph{A dense set of chromatic roots which is closed under
  multiplication by positive integers}, Discrete Math. \textbf{321} (2014),
  45--52.

\bibitem{BC}
J.~I. Brown and C.~J. Colbourn, \emph{Roots of the reliability polynomial},
  SIAM J. Discrete Math. \textbf{5}({4}) (1992), 571--585.

\bibitem{Reliability}
J.~I. Brown and C.~J. Colbourn, \emph{Network reliability}, in The CRC Handbook
  on the Tutte Polynomial and Related Topics (J.~Ellis-Monaghan and I.~Moffat,
  eds.), CRC Press, to appear.

\bibitem{BHN}
J.~I. Brown, C.~A. Hickman, and R.~J. Nowakowski, \emph{On the location of
  roots of independence polynomials}, J. Algebraic Combin. \textbf{19} (2004),
  273--282.

\bibitem{BM}
J.~I. Brown and L.~Mol, \emph{On the roots of all-terminal reliability
  polynomials}, Discrete Math. \textbf{340}({6}) (2017), 1287--1299.

\bibitem{BN}
J.~I. Brown and R.~J. Nowakowski, \emph{Bounding the roots of independence
  polynomials}, Ars Combin. \textbf{58} (2001), 113--120.

\bibitem{BN2}
J.~I. Brown and R.~J. Nowakowski, \emph{Average independence polynomials}, J.
  Combin. Theory Ser. B \textbf{93}({2}) (2005), 313--318.

\bibitem{BW}
J.~I. Brown and D.~G. Wagner, \emph{On the imaginary parts of chromatic roots},
  preprint, arXiv:1706.09093 [math.CO], June 2017.

\bibitem{Chudnovsky}
M.~Chudnovsky and P.~Seymour, \emph{The roots of the independence polynomial of
  a clawfree graph}, J. Combin. Theory Ser. B \textbf{97}({3}) (2007),
  350--357.

\bibitem{Colbourn}
C.~J. Colbourn, \emph{Network resilience}, SIAM J. Alg. Discrete Meth.
  \textbf{8} (1987), 404--409.

\bibitem{di}
M.~Dehmer and A.~Illi\'{c}, \emph{Location of zeros of wiener and distance
  polynomials}, PLoS One \textbf{7}({3}) (2012), e28328.

\bibitem{Chromatic}
F.~M. Dong, K.~M. Koh, and K.~L. Teo, \emph{Chromatic polynomials and
  chromaticity of graphs}, World Scientific, Singapore, 2005.

\bibitem{GKYY}
X.~Guo, D.~J. Klein, W.~Yan, and Y.-N. Yeh, \emph{Hyper-{W}iener vector,
  {W}iener matrix sequence, and {W}iener polynomial sequence of a graph}, Int.
  J. Quantum Chem. \textbf{106}({8}) (2006), 1756--1761.

\bibitem{GK}
I.~Gutman and T.~K\"{o}rtv\'{e}lyesi, \emph{Wiener indices and molecular
  surfaces}, Zeitschrift f\"{u}r Naturforschung A \textbf{50}({7}) (1995),
  669--671.

\bibitem{HL}
O.~J. Heilmann and E.~H. Lieb, \emph{Theory of monomer-dimer systems}, Comm.
  Math. Phys. \textbf{25}({3}) (1972), 190--232.

\bibitem{Hos}
H.~Hosoya, \emph{On some counting problems in chemistry}, Discrete Appl. Math.
  \textbf{19}({1-3}) (1988), 239--257.

\bibitem{Jackson}
B.~Jackson, \emph{A zero-free interval for chromatic polynomials of graphs},
  Combin. Probab. Comput. \textbf{2}({3}) (1993), 325--336.

\bibitem{EK}
S.~Kakeya, \emph{On the limits of roots of an algebraic equation with positive
  coefficients}, Tohoku Mathematical Journal \textbf{2} (1912), 140--142.

\bibitem{Independence}
V.~E. Levit and E.~Mandrescu, \emph{The independence polynomial of a graph -- a
  survey}, Proceedings of the 1st International Conference on Algebraic
  Informatics, Thessaloniki, Greece, 2005, pp.~233--254.

\bibitem{architect}
L.~March and P.~Steadman, \emph{The geometry of environment: an introduction to
  spatial organization in design}, MIT Press, Cambridge, MA, 1974.

\bibitem{Characteristic}
A.~Mowshowitz, \emph{The characteristic polynomial of a graph}, J. Combin.
  Theory Ser. B \textbf{12}({2}) (1972), 177--193.

\bibitem{RC}
D.~H. Rouvray and B.~C. Crafford, \emph{The dependence of physiochemical
  properties on topological factors}, S. Afr. J. Sci. \textbf{72} (1986), 47.

\bibitem{SYZ}
B.~E. Sagan, Y.-N. Yeh, and P.~Zhang, \emph{The {W}iener polynomial of a
  graph}, Int. J. Quantum Chem. \textbf{60}({5}) (1996), 959--969.

\bibitem{S}
A.~D. Sokal, \emph{Bounds on the complex zeros of (di)chromatic polynomials and
  {P}otts-model partition functions}, Combin. Probab. Comput. \textbf{10}({1})
  (2001), 41--77.

\bibitem{Stanley}
R.~P. Stanley, \emph{Log-concave and unimodal sequences in algebra,
  combinatorics, and geometry}, Ann. New York Acad. Sci. \textbf{576} (1989),
  500--535.

\bibitem{ST}
L.~I. Stiel and G.~Thodos, \emph{The normal boiling points and critical
  constants of saturated aliphatic hydrocarbons}, {AIChE} J. \textbf{8}({4})
  (1962), 527--529.

\bibitem{Thom}
C.~Thomassen, \emph{The zero-free intervals for chromatic polynomials of
  graphs}, Combin. Probab. Comput. \textbf{6} (1997), 497--506.

\bibitem{WRSSG}
H.~B. Walikar, H.~S. Ramane, L.~Sindagi, S.~S. Shirakol, and I.~Gutman,
  \emph{Hosoya polynomial of thorn trees, rods, rings and stars}, Kragujevac J.
  Sci. \textbf{28} (2006), 47--56.

\bibitem{Wien}
H.~Wiener, \emph{Structural determination of paraffin boiling points}, J. Am.
  Chem. Soc. \textbf{69}({1}) (1947), 2636--2638.

\bibitem{YYY}
W.~Yan, B.-Y. Yang, and Y.-N. Yeh, \emph{The behavior of {W}iener indices and
  polynomials of graphs under five graph decorations}, Appl. Math. Letters
  \textbf{20}({3}) (2007), 290--295.

\end{thebibliography}


\providecommand{\bysame}{\leavevmode\hbox to3em{\hrulefill}\thinspace}
\providecommand{\MR}{\relax\ifhmode\unskip\space\fi MR }
\providecommand{\MRhref}[2]{%
  \href{http://www.ams.org/mathscinet-getitem?mr=#1}{#2}
}
\providecommand{\href}[2]{#2}

\end{document}